\newtheorem{theorem}{Theorem}
\newtheorem{lemma}[theorem]{Lemma}
\begin{document}

\title{On Farber's invariants for simple $2q$-knots}

\author{Jonathan A. Hillman }
\address{School of Mathematics and Statistics\\
     University of Sydney, NSW 2006\\
      Australia }

\email{jonathan.hillman@sydney.edu.au}

\begin{abstract}
Let $K$ be a simple $2q$-knot with exterior $X$. We show directly how the
Farber quintuple $(A,\Pi,\alpha,\ell,\psi)$ determines the homotopy type of $X$
if the torsion subgroup of $A=\pi_q(X)$ has odd order.
We comment briefly on the possible role of the EHP sequence in recovering 
the boundary inclusion from the duality pairings $\ell $ and $\psi$.
Finally we reformulate the Farber quintuple as an hermitian self-duality
of an object in an additive category with involution.
\end{abstract}

\keywords{EHP sequence, Farber quintuple, $F$-form, hermitian duality, 
simple knot.}

\subjclass{ 57Q45}

\maketitle

In a series of papers in the early 1980s Farber showed that
stable high dimensional knots could be classified in terms of
stable homotopy theory, and in particular that if $q\geq4$ 
simple $2q$-knots may classified up to isotopy by quintuples 
$(A,\Pi,\alpha, \ell,\psi)$, 
where $A,\Pi$ are $\mathbb{Z}[\mathbb{Z}]$-modules, 
$\alpha$ is a monomorphism and $\ell$ and $\psi$ are sesquilinear pairings
\cite{Fa81,Fa84}.
Certain significant special cases were dealt with earlier by
Kearton \cite{Ke83} and Kojima \cite{Ko79}.

Farber's argument for showing that his invariant determined the knot 
involves Spanier-Whitehead duality for highly connected Seifert hypersurfaces
and Wall's embedding theorem 
(to interpolate between different choices of such hypersurfaces).
An alternative approach (avoiding Seifert arguments)
is suggested by the work of Lashof and Shaneson,
who used surgery over $\mathbb{Z}$ to show that the exterior $X$ 
of a high dimensional knot with group $\mathbb{Z}$ is determined 
up to homeomorphism by the homotopy type of the pair 
$(X,\partial{X})$ \cite{LS69}.
There remains a possible ambiguity of order 2,
and a further argument is needed to show that the exterior determines the knot
if $(X,\partial{X})$ is highly connected \cite{Ri94}.

In this note we shall provide some evidence for the feasibility of
such a homotopy-theoretic approach.
We show first that the homotopy type of $X$ is determined by the modules 
$A=\pi_q(X)$ and $\pi_{q+1}(X)$ and a $k$-invariant $\kappa(X)$.
We then show that these invariants may be derived algebraically from 
the Farber quintuple.
(In the case of $\kappa(X)$ we need to assume that the $\mathbb{Z}$-torsion 
submodule $T(A)$ has odd order.)
The work of Farber implies that the duality pairings $\ell$ and $\psi$ 
determine the inclusion of the boundary.
We have not yet been able to find a homotopy-theoretic proof
of this implication.
However the longitude (i.e., the lift of the boundary inclusion to infinite
cyclic covers) has trivial suspension,
and the EHP sequence of Chapter XII of \cite{GWW} suggests a close connection
between the longitude and these duality pairings.
Further progress appears to depend on having a more direct construction
of the pairing $\psi$ (or at least of the version $\Psi$ used below
for the case with $T(A)$ odd).

Finally, Quebbemann, Scharlau and Schulte have codified the notion of hermitian 
pairing in an additive category $\mathcal{C}$ with an involution $*$ \cite{QSS},
and we shall show that Farber's invariant may be described in such terms.
This approach has proven useful in studying the factorization of some
nontrivial classes of higher-dimensional knots into sums of indecomposable knots 
\cite{Hi86, HK89}.
It is also of interest in connection with criteria for a knot to be doubly null
concordant \cite{FH93,Hi86}.

\section{Knots}

An {\it $n$-knot} is a locally flat embedding $K:S^n\to{S}^{n+2}$.
We shall assume that $S^n$ has the standard orientation as the boundary 
of the unit ball in $\mathbb{R}^{n+1}$, for all $n>0$.
The {\it exterior} of $K$ is $X=X(K)=S^{n+2}\setminus{N(K)}$, 
where $N(K)\cong{S^n}\times\mathbb{R}^2$ is an open regular neighbourhood 
of the image of $K$, and $\partial{X}\cong{S^n}\times{S^1}$.
The orientations of the spheres determine an orientation for $X$ and
an isotopy class of {\it meridians}  ${S^1}\subset\partial{X}$.
The inclusion of  a meridian induces an isomorphism on integral homology,
and so there is a canonical generator $t$ for 
$H=H_1(X(K);\mathbb{Z})\cong\mathbb{Z}$.
Hence we may identify $\mathbb{Z}[H]$ with 
$\mathbb{Z}[\mathbb{Z}]=\Lambda=\mathbb{Z}[t,t^{-1}]$.

Reattaching $S^n\times D^2$ to $X=X(K)$ along $\partial{X}\cong S^n\times{S^1}$
using the nontrivial twist map of $S^n\times{S^1}$ gives a homotopy
$(n+2)$-sphere. 
The embedding of $S^n\times\{0\}$ in this sphere gives an $n$-knot $K^*$,
called the {\it Gluck reconstruction} of $K$,
and any knot with exterior homeomorphic to $X$ is isotopic to $K$ or to $K^*$
(up to change of orientations).

Let $p:X'\to{X}$ be the infinite cyclic cover space corresponding to 
the commutator subgroup $\pi'$.
The modules $H_*(X;\Lambda)=H_*(X';\mathbb{Z})$ are finitely generated, 
since $X$ is compact and $\Lambda$ is noetherian.
Moreover $t-1$ acts invertibly on $H_*(X;\Lambda)$, 
since $X$ is an homology circle, and so these are torsion modules.
The {\it longitude} $\lambda_K\in\pi_n(X)$ is the class of the 
inclusion $S^n\times\{1\}\to{S^n}\times{S^1}\cong\partial{X}\subset{X}$.
If $V$ is a Seifert hypersurface for $K$ then $\lambda_K$ factors though
the inclusion of $\partial{V}\cong{S^n}$ into $V$.
Let $k:V/\partial{V}\to S^{n+1}$ be the cofibre of the natural map
from $V$ to $V/\partial{V}$.
Then the Pontrjagin-Thom collapse of
$S^{n+2}$ onto $S(V/\partial{V})$ provides a section to $Sk$.
It follows easily that the suspension of $\lambda_K$ is trivial \cite{Su}.

The knot $K$ is {\it $r$-simple} if $\pi_1(X)\cong\mathbb{Z}$ and
$\pi_j(X)=0$ for $1<j\leq{r}$.
The {\it sum} $K\#\widetilde{K}$ of $r$-simple $n$-knots $K$ and 
$\widetilde{K}$ is $r$-simple.
If $K$ is 1-simple then $X$ is determined up to homeomorphism by the homotopy 
type of the pair $(X,\partial{X})$ \cite{LS69}.
The knot $K$ is {\it simple} if it is $[(n-1)/2]$-simple and $n\geq3$.

It is a consequence of Farber's work on stable knots that an
$r$-simple $n$-knot $K$ is determined by its exterior
(i.e., $K\cong{K^*}$) if $3r>n$.
In \cite{Ri94} Richter shows that 
if $3r\geq{n}$ and $K$ is an $r$-simple $n$-knot the twist map 
of $\partial{X}$ extends to a self homotopy equivalence of $X$
and hence that $K$ is determined by its complement,
as a straightforward consequence of Williams' Poincar\'e Embedding Theorem.
Richter's approach is close to the one taken here,
in that he strives to work as far as possible in the homotopy category,
before appealing to geometric topology through surgery.
In particular, he subsequently gave a purely homotopy theoretic proof of
William's theorem \cite{Ri97}.
(The application to knot theory does however use the fact that $K$ has a 
highly connected Seifert hypersurface, in an essential way.)
Is there an {\it ad hoc\/} simplification for the case of simple knots? 

\section{Modules and extensions}

If $M$ is a finitely generated left $\Lambda$-module let $T(M)$ 
be the maximal finite submodule of $M$.
In the cases of interest here $T(M)$ is the $\mathbb{Z}$-torsion 
submodule of $M$.
The quotient $M/T(M)$ is $\mathbb{Z}$-torsion free, 
and has projective dimension at most 1 ($p.d._\Lambda{M/T(M)}\leq1$).
If $R$ is a {\it right} $\Lambda$-module let $\overline{R}$ denote the left
module with the conjugate structure determined by $t.r=r{t^{-1}}$ 
for all $r\in{R}$.

The extension modules ${Ext^i_\Lambda(M,\Lambda)}$ are naturally 
right modules.
Let $e^iM=\overline{Ext^i_\Lambda(M,\Lambda)}$.
If $M$ is a finite $\Lambda$-module then $e^0M=e^1M=0$,
and so the inclusion $T(M)\leq{M}$ induces isomorphisms
$e^iM\cong{e^i(M/T(M))}$ for $i=0,1$ and $e^2M\cong{e^2T(M)}$.
More generally, ${Ext}^2_\Lambda(M,N)\cong{Ext}^2_\Lambda(T(M),N)$,
for any $\Lambda$-module $N$.
If $p.d._\Lambda{M}=d$ then $Ext^d_\Lambda(M,N)$ and 
$Ext^d_\Lambda(M,\Lambda)\otimes_\Lambda{N}$ are naturally isomorphic,
for if $P_*$ is a finitely generated free resolution of $M$
then $Hom_\Lambda(P_*,N)\cong{Hom}_\Lambda(P_*,\Lambda)\otimes_\Lambda{N}$,
and $-\otimes_\Lambda{N}$ is right exact.

If $P_*$ is a chain complex of finitely generated free left $\Lambda$-modules
let $D(P)_j=\overline{Hom_\Lambda(P_{-j},\Lambda)}$ be the dual complex,
and let $P[n]_*$ be the $n$-fold suspension, with $P[n]_j=P_{j-n}$ for all $j$.
Note that $D(D(P))_*$ is naturally isomorphic to $P_*$.
If $H_*(P)=0$ for $j\not={q}$ or $q+1$ then $P$ is determined up to chain
homotopy equivalence by the modules $A=H_q(P_*)$ and $B=H_{q+1}(P_*)$
and a $k$-invariant $\kappa(P_*)$ in ${Ext}^2_\Lambda(A,B)$ \cite{Do60}.
This may be identified with the class of the sequence
$$0\to{B}\to{P_{q+1}}/\partial{P_{q+2}}\to\mathrm{Ker}(\partial_q)\to{A}\to0$$
in ${Ext}^2_\Lambda(A,B)\cong{Ext}^2_\Lambda(T(A),B)$.
Every such class is realizable by a free complex concentrated in degrees
$[q,q+2]$.
(Note also that ${Ext}^2_\Lambda(A,B)$ is finite,
since $T(A)$ and $B$ are finitely generated.)

A {\it simple $\Lambda$-complex concentrated in degrees $[q,q+2]$}
is a finitely generated free $\Lambda$-complex $C_*$ whose homology modules 
are $\Lambda$-torsion modules and are 0 except in degrees $q$ and $q+1$.
Let $A=H_q(C_*)$, $T=T(A)$ and $B=H_{q+1}(C_*)$ and let 
$k(A)\in{Ext}^1_\Lambda(A/T,T)\cong\overline{e^1A}\otimes{T}$ and 
$\kappa(C_*)\in{Ext}^2_\Lambda(A,B)\cong\overline{e^2T}\otimes{B}$ 
be the associated extension classes.

\begin{theorem}
Let $C_*$ be a simple $\Lambda$-chain complex concentrated in degrees 
${[q,q+2]}$.
\begin{enumerate}
\item  $C$ is determined up to chain homotopy equivalence by the quintuple
\[
(A/T(A),T(A),B,k(A),\kappa(C_*));
\]
\item  any such system of invariants with 
$A$ and $B$ finitely generated $\Lambda$-torsion modules and 
$p.d._\Lambda{B}\leq1$ is realized by such a chain complex;

\item $D_*=D(C)_*$ is a simple complex concentrated in degrees 
$[-q-2,-q]$, with invariants $(e^1B,e^2A,e^1A,k_D,\kappa(D_*))$
where $k_D$ and $\kappa(D_*)$ are determined by $\kappa(C_*)$ and $k(A)$,
respectively.
\end{enumerate}
\end{theorem}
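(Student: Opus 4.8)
The plan is to dispatch (1) and (2) quickly, by combining the cited theorem of Dold with the reconstruction of $A$ from its torsion extension, and to reserve the real effort for the duality computation in (3). For (1) I would invoke \cite{Do60}: since the homology of $C$ is concentrated in degrees $q$ and $q+1$, the complex is determined up to chain homotopy equivalence by the triple $(A,B,\kappa(C_*))$ with $\kappa(C_*)\in Ext^2_\Lambda(A,B)$. It then suffices to observe that $A$ is recovered from the short exact sequence $0\to T\to A\to A/T\to0$, hence from $(A/T,T,k(A))$, where $k(A)\in Ext^1_\Lambda(A/T,T)$ is the class of this extension; substituting this description of $A$ into Dold's triple yields the quintuple. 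For (2) I would run the construction backwards: given the data, first assemble $A$ as the extension of $A/T$ by $T$ classified by $k(A)$, so that $A$ is a finitely generated $\Lambda$-torsion module with $T(A)=T$, and then realize $\kappa\in Ext^2_\Lambda(A,B)$ by a free complex in degrees $[q,q+2]$ as recorded above. The homology of such a complex is $\Lambda$-torsion by construction, so it is simple; and the hypothesis $p.d._\Lambda B\leq1$ is precisely what is needed, since in any such complex $\ker\partial_{q+1}$ is a submodule of the free module $C_{q+1}$ over the two-dimensional ring $\Lambda$ and so has projective dimension $\leq1$, whence the exact sequence $0\to C_{q+2}\to\ker\partial_{q+1}\to B\to0$ forces $p.d._\Lambda B\leq1$.

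For (3) the substance is a duality calculation, and I would begin with the homology of $D(C)$. Using $H_{-j}(D(C))=\overline{H^j(Hom_\Lambda(C,\Lambda))}$ together with the universal coefficient spectral sequence $E_2^{s,t}=Ext^s_\Lambda(H_t(C),\Lambda)\Rightarrow H^{s+t}(Hom_\Lambda(C,\Lambda))$, the hypotheses that $A$ and $B$ are torsion (so $e^0=0$) and that $p.d._\Lambda B\leq1$ (so $e^2B=0$) leave only $e^1A$ in degree $-q-1$ and, in degree $-q-2$, a filtration extension $0\to e^2A\to H_{-q-2}(D(C))\to e^1B\to0$; all differentials vanish for dimensional reasons, so the sequence degenerates (one may equally read this off directly, since $D(C)$ has only three terms). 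As $e^2A=e^2T$ is finite and $e^1B$ is $\mathbb{Z}$-torsion-free, this exhibits $D(C)$ as a simple complex in degrees $[-q-2,-q]$ whose lower homology has torsion submodule $e^2A$ and torsion-free quotient $e^1B$ and whose upper homology is $e^1A$, matching the stated modules $(e^1B,e^2A,e^1A)$.

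It then remains to identify the two secondary invariants, and here I expect the main obstacle. The class $k_D$ is the filtration extension just produced, lying in $Ext^1_\Lambda(e^1B,e^2T)$, while $\kappa(C_*)$ lies in $Ext^2_\Lambda(T,B)$; applying the formula $Ext^d_\Lambda(M,N)\cong Ext^d_\Lambda(M,\Lambda)\otimes_\Lambda N$ of the preceding discussion, together with the double-duality isomorphisms $e^1e^1B\cong B$ and $e^2e^2T\cong T$, presents these two groups, up to the conjugation involution $t\mapsto t^{-1}$, as $\overline{B}\otimes_\Lambda e^2T$ and $\overline{e^2T}\otimes_\Lambda B$, and the task is to check that $\kappa(C_*)$ is carried to $k_D$ under the resulting identification. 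The symmetric argument, applied to the extension $0\to T\to A\to A/T\to0$ in place of $\kappa(C_*)$, then shows that $\kappa(D_*)\in Ext^2_\Lambda(e^2T,e^1A)$ is determined by $k(A)$. The delicate point throughout is that the filtration-theoretic extension and the algebraically dualized class must be shown to coincide, not merely to inhabit isomorphic groups: this is a naturality check that has to keep careful account of the conjugation and of which summand plays the role of sub and which of quotient. The cleanest safeguard, which also pins down these choices, is to verify that applying $D$ a second time restores the original quintuple, as it must since $D(D(C))_*\cong C_*$.
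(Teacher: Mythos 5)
Your treatment of (1) matches the paper's, your necessity argument for $p.d._\Lambda B\leq1$ is a reasonable supplement, and your spectral-sequence identification of the homology modules of $D(C)_*$ agrees with what the paper asserts. But there is a genuine gap at exactly the point you flag as ``the main obstacle'': you never actually show that the filtration extension $0\to e^2A\to H_{-q-2}(D_*)\to e^1B\to0$ is the image of $\kappa(C_*)$ under the transposition isomorphism $\overline{e^2T}\otimes B\cong\overline{B}\otimes e^2T$ (nor the companion statement for $\kappa(D_*)$ and $k(A)$); you only name the task. The double-duality ``safeguard'' $D(D(C))_*\cong C_*$ does not close this gap: it verifies that the composite of the two identifications is the identity, which is consistent with, but does not imply, each identification being the asserted transposition. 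Your part (2) is likewise deferred to the earlier remark that classes in $Ext^2_\Lambda(A,B)$ are realizable, without producing a complex whose degree-$q$ homology is the prescribed extension $A$ of $A/T$ by $T$ with the prescribed $k(A)$.

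The paper closes both gaps with a single device that your proposal lacks: an explicit representative of the chain homotopy type. Choosing free resolutions $P_1\to P_0$, $Q_1\to Q_0$, $E_2\to E_1\to E_0$ of $A/T$, $B$, $T$ (the length-one resolution of $B$ using $p.d._\Lambda B\leq1$) and cocycle representatives $k:P_1\to E_0$, $\kappa:E_2\to Q_0$, a double mapping cone yields the three-term complex $E_2\oplus Q_1\to P_1\oplus E_1\oplus Q_0\to P_0\oplus E_0$ with the given quintuple of invariants; this proves (2). Its $\Lambda$-dual is then \emph{visibly} a complex of the same shape in which $e^0\kappa$ and $e^0k$ occupy the positions of the $k$-invariant and the degree-two extension class respectively, which is precisely the correspondence claimed in (3). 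To complete your argument you would either need to reproduce such an explicit model, or carry out the naturality/Bockstein bookkeeping you allude to, with the sub/quotient roles and the conjugation tracked throughout; as written, the crux of (3) is identified but not proved.
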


\begin{proof}
The module $A$ is determined by $T=T(A)$, $A/T$ and $k(A)$,
and so the first assertion follows from the result of \cite{Do60} cited above.
Since $A$ is a torsion module $D_*$ is easily seen to be a simple
$\Lambda$-complex.
If $A_D=H_{-q-2}(D_*)$ and $B_D=H_{-q-1}(D_*)$ then $B_D\cong{e^1A}$, 
$T_D=T(A_D)\cong{e^2A}=e^2T$ and $A_D/T_D\cong{e^1B}$.

There are natural isomorphisms $T\cong{e^2e^2A}=e^2T_D$ and $B\cong{e^1A_D}$.
Hence the transpositions 
$\tau_1:\overline{e^1A}\otimes{T}\cong\overline{T}\otimes{e^1A}$ and
$\tau_2:\overline{e^2T}\otimes{B}\cong\overline{B}\otimes{e^2T}$ 
given by interchange of factors induce natural isomorphisms 
$\overline{e^1A}\otimes{T}\cong \overline{e^2T_D}\otimes{B_D}$
and $\overline{e^2T}\otimes{B}\cong \overline{e^1A_D}\otimes{T_D}$
between the abelian groups in which the extension classes for $A$ and $D_*$
and for $C_*$ and $A_D$ lie. 

To verify (2) and see that the extension classes correspond as asserted
we shall construct an explicit representative for
the chain homotopy type determined by the invariants 
$(A/T,T,B,k(A),\kappa(C_*))$.
We shall assume for simplicity of notation that $q=0$.
Let $P_1\to{P_0}$, $Q_1\to{Q_0}$ and $E_2\to{E_1}\to{E_0}$
be finite free resolutions of $A/T$, $B$ and $T$, respectively,
and let $k:P_1\to{E_0}$ and $\kappa:E_2\to{Q_0}$ be homomorphisms representing
$k(A)$ and $\kappa(C_*)$, respectively.
Then we may using a mapping cone construction (twice) as in Satz 7.6 of 
\cite{Do60} to construct a complex
\begin{equation}
\begin{CD}
{E_2}\oplus{Q_1}@>\Delta_2(\kappa)>>{P_1}\oplus{E_1}\oplus{Q_0}
@>\Delta_1(k)>>P_0\oplus{E_0}
\end{CD}
\end{equation}
with these invariants.
Here $\Delta_2(\kappa)$ and $\Delta_1(k)$ are matrices whose nonzero entries 
involve the differentials of the constituent complexes and the homomorphisms 
$k$ and $\kappa$.
It is clear that the dual complex is isomorphic to the complex obtained
by taking $e^0\kappa$ and $e^0k$ as representatives
for $k_D$ and $\kappa(D_*)$, respectively.
\end{proof}

In particular, if $D(C)_*\simeq{C_*}$ then the chain homotopy type of $C_*$ is determined 
by the module $A$ alone, for then $B\cong{e^1A}$ and $\kappa(C_*)$
is determined by $k(A)$.

\section{Duality for finite $\mathbb{F}\Lambda$-modules}

Let $\mathbb{F}$ be a field and let $\mathbb{F}\Lambda=\mathbb{F}[t,t^{-1}]$.
If $M$ is a finite dimensional $\mathbb{F}\Lambda$-module let 
$E(M)=\overline{Ext^1_{\mathbb{F}\Lambda}(M,\mathbb{F}\Lambda)}$,
$F(M)=\overline{Hom_{\mathbb{F}\Lambda}(M,\mathbb{F}(t)/\mathbb{F}\Lambda)}$
and $M^*=Hom_{\mathbb{F}}(M,\mathbb{F})$.
(The left $\Lambda$-module structure on the latter group is given by 
$(t\phi)(m)=\phi(t^{-1}m)$ for all $m\in{M}$ and $\phi:M\to\mathbb{F}$.)
These modules are each non-canonically isomorphic to $M$,
since $\mathbb{F}\Lambda$ is a PID.
The functors $E(-)$, $F(-)$ and $(-)^*$ each define an involution 
on the category of finite dimensional $\mathbb{F}\Lambda$-modules.
They are in fact naturally equivalent.
We shall give a simple proof of this for the case when $\mathbb{F}$ is finite.

If $f\in\mathbb{F}\Lambda$ then 
$\mathrm{Res}(f\frac{dt}t,\infty)+\mathrm{Res}(f\frac{dt}t,0)=0$,
since the only poles of $f\frac{dt}t$ are at 0 and $\infty$.
Thus we may define an $\mathbb{F}$-linear function
$R:\mathbb{F}(t)/\mathbb{F}\Lambda\to\mathbb{F}$ by setting
$R(q+\mathbb{F}\Lambda)=
\mathrm{Res}(q\frac{dt}t,\infty)+\mathrm{Res}(q\frac{dt}t,0)$
for all $q\in\mathbb{F}(t)$.
In particular,
if $q=\frac{f}{t^n-1}$ where $f\in\mathbb{F}[t]$ has degree $<n$ then
$R(q +\mathbb{F}\Lambda)=-f(0)$.

\begin{theorem}
There are natural equivalences $E(M)\cong{F(M)}\cong{M^*}$
on the category of finite dimensional $\mathbb{F}\Lambda$-modules, 
if $\mathbb{F}$ is a finite field.
\end{theorem}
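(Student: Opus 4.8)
The plan is to establish the two natural isomorphisms separately, handling $E(M)\cong F(M)$ first because it holds over any field and needs no residue computation, and then $F(M)\cong M^*$, where the finiteness of $\mathbb{F}$ and the functional $R$ enter. Throughout I would use that a finite dimensional $\mathbb{F}\Lambda$-module $M$ is a torsion module: $t$ acts as an invertible $\mathbb{F}$-linear endomorphism of the finite dimensional space $M$, so its minimal polynomial $\delta(t)$ (with $\delta(0)\neq0$) annihilates $M$; and when $\mathbb{F}$ is finite $t$ has finite order in $GL(M)$, so in fact $t^n-1$ annihilates $M$ for some $n>0$.

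For $E(M)\cong F(M)$ I would apply $Hom_{\mathbb{F}\Lambda}(M,-)$ to the short exact sequence of coefficient modules
$$0\to\mathbb{F}\Lambda\to\mathbb{F}(t)\to\mathbb{F}(t)/\mathbb{F}\Lambda\to0.$$
Since $M$ is torsion and $\mathbb{F}(t)$ is torsion free, $Hom_{\mathbb{F}\Lambda}(M,\mathbb{F}(t))=0$; and $\mathbb{F}(t)$ is divisible, hence injective over the PID $\mathbb{F}\Lambda$, so $Ext^1_{\mathbb{F}\Lambda}(M,\mathbb{F}(t))=0$. The connecting homomorphism in the resulting long exact sequence then gives a natural isomorphism $Hom_{\mathbb{F}\Lambda}(M,\mathbb{F}(t)/\mathbb{F}\Lambda)\cong Ext^1_{\mathbb{F}\Lambda}(M,\mathbb{F}\Lambda)$, and applying the conjugation $\overline{(\,\cdot\,)}$ to both sides yields $F(M)\cong E(M)$, naturally in $M$.

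For $F(M)\cong M^*$ I would compose with the residue functional: set $\Theta_M(\phi)=R\circ\phi$ for $\phi\in Hom_{\mathbb{F}\Lambda}(M,\mathbb{F}(t)/\mathbb{F}\Lambda)$, an $\mathbb{F}$-linear map $F(M)\to M^*$. First I would check that the conjugate structures match: in $F(M)$ the element $t$ sends $\phi$ to the homomorphism $m\mapsto\phi(t^{-1}m)$, while in $M^*$ it sends $\psi$ to $m\mapsto\psi(t^{-1}m)$, so $\Theta_M(t\phi)$ and $t\,\Theta_M(\phi)$ both carry $m$ to $R(\phi(t^{-1}m))$; hence $\Theta_M$ is $\mathbb{F}\Lambda$-linear, and naturality is immediate from $(R\circ\phi)\circ g=R\circ(\phi\circ g)$. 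Since $F(M)$ and $M^*$ are both non-canonically isomorphic to $M$ and $\mathbb{F}$ is finite, they are finite of equal cardinality, so it suffices to prove $\Theta_M$ injective.

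Injectivity is the crux, and it is exactly where the explicit form of $R$ does its work. If $\Theta_M(\phi)=0$ then $R(t^k\phi(m))=R(\phi(t^km))=0$ for every $m\in M$ and $k\in\mathbb{Z}$, so it is enough to show that any $y\in\mathbb{F}(t)/\mathbb{F}\Lambda$ with $R(t^ky)=0$ for all $k$ must vanish. Writing $y=\frac{f}{t^n-1}+\mathbb{F}\Lambda$ with $\deg f<n$ (possible since $t^n-1$ annihilates $M$), reducing $t^kf$ modulo $t^n-1$ merely permutes the coefficients cyclically, and the formula $R(\frac{g}{t^n-1}+\mathbb{F}\Lambda)=-g(0)$ recorded before the statement shows that the scalars $R(t^ky)$ for $k=0,\dots,n-1$ recover, up to sign and reindexing, all the coefficients of $f$. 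Thus they vanish only if $f=0$, i.e. $y=0$; equivalently $R(t^k\,\cdot\,)$ is a perfect pairing between the $(t^n-1)$-torsion of $\mathbb{F}(t)/\mathbb{F}\Lambda$ and $\mathbb{F}\Lambda$. This gives injectivity of $\Theta_M$, and hence the chain of natural isomorphisms $E(M)\cong F(M)\cong M^*$. I expect the only genuine subtlety to be the conjugation bookkeeping in the linearity check; the residue computation, while the heart of the matter, is forced by the formula already established.
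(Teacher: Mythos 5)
Your proof is correct, and its overall architecture is the same as the paper's: the isomorphism $E(M)\cong F(M)$ is the connecting homomorphism for $0\to\mathbb{F}\Lambda\to\mathbb{F}(t)\to\mathbb{F}(t)/\mathbb{F}\Lambda\to0$ (an isomorphism because $M$ is torsion and $\mathbb{F}(t)$ is divisible, hence injective over the PID $\mathbb{F}\Lambda$ -- the paper phrases this via $Hom(P_*,-)$ for a free resolution, you via the long exact $Ext$ sequence, which is the same computation), and the map $F(M)\to M^*$ is post-composition with $R$, finished off by a dimension count. The one genuine divergence is in how bijectivity of $\chi_M=R\circ(-)$ is established. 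The paper writes down an explicit section $L_M(g)(m)=\frac{\hat g(m)}{t^{n!}-1}+\mathbb{F}\Lambda$ with $\hat g(m)=\sum_{0\le k<n!}g(t^km)t^{-k}$, verifies $\chi_ML_M=\mathrm{id}$, and deduces injectivity from equality of dimensions; you instead prove injectivity directly by observing that the scalars $R(t^ky)$, $0\le k<n$, recover (up to sign and a cyclic reindexing) all coefficients of $f$ when $y=\frac{f}{t^n-1}+\mathbb{F}\Lambda$ -- i.e.\ that $R$ induces a nondegenerate pairing on the $(t^n-1)$-torsion -- and deduce surjectivity from equality of dimensions. Your injectivity computation is correct, and it is arguably cleaner in that it needs only some $n$ with $(t^n-1)M=0$ rather than the $t^{n!}-1$ device; what the paper's version buys is an explicit formula for the inverse $L_M=\chi_M^{-1}$, which is the form in which Levine's integral analogue (quoted after the theorem) is stated and used. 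Your conjugation bookkeeping for the $\mathbb{F}\Lambda$-linearity of $\Theta_M$ and the naturality check are both fine.
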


\begin{proof}
Let $0\to{P_1}\to{P_0}\to{M}\to0$ be a free resolution of $M$.
If $\phi\in{F(M)}$ let $\phi_0:P_0\to \mathbb{F}(t)$ be a lift of $\phi$.
Then $\phi_0\partial_1$ has image in $\mathbb{F}\Lambda$, and so defines a 
homomorphism $\phi_1:P_1\to\mathbb{F}\Lambda$ such that $\phi_1\partial_2=0$.
Consideration of the short exact sequence of complexes
\begin{equation*}
0\to Hom_{\mathbb{F}\Lambda}(P_*,\mathbb{F}\Lambda)\to 
{Hom}_{\mathbb{F}\Lambda}(P_*,\mathbb{F}(t))\to
Hom_{\mathbb{F}\Lambda}(P_*,\mathbb{F}(t)/\mathbb{F}\Lambda)\to0
\end{equation*}
shows that $\delta_M(\phi)=[\phi_1]$,
where $\delta_M:F(M)\to{E(M)}$ is the Bockstein homomorphism associated to the
coefficient sequence.
(The extension corresponding to $\delta_M(\phi)$ 
is the pullback over $\phi$ of the sequence
$0\to\mathbb{F}\Lambda\to\mathbb{F}(t)\to
\mathbb{F}(t)/\mathbb{F}\Lambda\to0$.) 
Since $M$ is a torsion $\mathbb{F}\Lambda$-module and $\mathbb{F}(t)$ 
is divisible $\delta_M$ is an isomorphism.

Define $\chi_M:F(M)\to{M^*}$ by
$\chi_M(\phi)(m)=R(\phi(m))$ for all $\phi\in{F(M)}$ and $m\in{M}$.
Then $\chi_M$ is $\mathbb{F}\Lambda$-linear.
Suppose now that $M$ has finite order $n$.
If $g\in{M^*}$ let
$L_M(g)(m)=\frac{\hat{g}(m)}{t^{n!}-1}+\mathbb{F}\Lambda$, 
where $\hat{g}(m)=\Sigma_{0\leq{k}<n!}g(t^km)t^{-k}$ for all $m\in{M}$. 
Then $L_M(g)$ is $\mathbb{F}\Lambda$-linear, since ${(t^{n!}-1)M}=0$,
and $\chi_M(L_M(g))=g$.
(This is the only point at which we need $\mathbb{F}$ and $M$ to be finite).
Since $F(M)$ and $M^*$ have the same dimension 
and $\chi_M$ is $\mathbb{F}\Lambda$-linear
$\chi_M$ and $L_M$ are mutually inverse isomorphisms.
\end{proof}

Let $\tau_M=\chi_M\delta_M^{-1}:E(M)\to{M^*}$ be the composite isomorphism.

Theorem 2 holds also for infinite fields, but verifying that $\chi$ is
an equivalence involves slightly more work.
See \cite{Li,Tr78} for the case $\mathbb{F}=\mathbb{Q}$.

There is also a corresponding result over $\Lambda$, i.e.,
with integer coefficients. 
Levine showed that if $M$ is a finite $\Lambda$-module such that
$(t^k-1)M=0$ then 
$e^2M\cong\overline{Ext^1_\Lambda(M,\Lambda/(t^k-1))}\cong
\overline{Hom_\Lambda(M,\mathbb{Q}/\mathbb{Z}\otimes\Lambda/(t^k-1))}$
(via appropriate Bockstein homomorphisms),
which is in turn isomorphic to $Hom_\mathbb{Z}(M,\mathbb{Q}/\mathbb{Z})$
(via an explicit homomorphism similar to $L_M$) \cite{Le77}.
In particular, if $M$ has exponent $n$ as an abelian group then
$e^2M\cong{Hom_\mathbb{Z}(M,Z/nZ)}$.
If $A$ is a finitely generated $\Lambda$-module let $\sigma_A:e^2A\to
{Hom}_\mathbb{Z}(T(A),\mathbb{Q}/\mathbb{Z})$ be the composite of these
isomorphisms with the natural isomorphism $e^2A=e^2T(A)$.

\section{Algebraic invariants for the exterior of a simple $2q$-knot}

Let $K$ be a simple $2q$-knot with $q>2$.
Then $\pi_1(X)\cong\mathbb{Z}$ and $H_i(X;\Lambda)=0$ for $0<i<q$.
Let $A=H_q(X;\Lambda)$ and $B=H_{q+1}(X;\Lambda)$.
As observed in \S1, 
these are torsion $\Lambda$-modules on which $t-1$ acts invertibly.
The Universal Coefficient spectral sequence and Poincar\'e duality 
give isomorphisms $e^1A={e^1(A/T(A))}\cong{B}$ and
$e^2A\cong{T}(\overline{H^{q+2}(X;\Lambda)})\cong{T}(A)$,
while $H_i(X;\Lambda)=0$ for $i>q+1$.
In particular, there is a nonsingular pairing 
$\ell:T(A)\times{T}(A)\to\mathbb{Q}/\mathbb{Z}$ and $p.d._\Lambda{B}\leq1$.
Since $B=(t-1)B$ it follows that $B$ is $\mathbb{Z}$-torsion free \cite{Le77}.
The exterior $X$ fibres over $S^1$ if and only if $A$ is finitely generated as
an abelian group \cite{BL66}.

We may identify $A$ with the $\Lambda$-module $\pi_q(X)=\pi_q(X')$, 
by the Hurewicz Theorem.
Let $\widetilde{\pi}_{q+1}=\pi_{q+1}(X')=\pi_{q+1}(X)$, considered as a $\Lambda$-module,
and let $h:\widetilde\pi_{q+1}\to{B}$ be the Hurewicz homomorphism 
for $X'$ in degree $q+1$.
Let $\eta\in\pi_3(S^2)$ be the Hopf map.
Then $\eta_q=\Sigma^{q-2}\eta$ generates $\pi_{q+1}(S^q)=\pi_1^{st}=Z/2Z$, 
for all $q>2$.
Since $X'$ is $(q-1)$-connected and $q>2$ there is an exact sequence
\begin{equation}
\begin{CD}
\pi_{q+2}(X')\to{H_{q+2}}(X';\mathbb{Z})@>b>>
{A/2A}@>\eta_q^*>>\widetilde{\pi}_{q+1}@>h>>{H_{q+1}(X';\mathbb{Z})}\to0,
\end{CD}
\end{equation}
by Theorem XII.3.12 of \cite{GWW}.
Here $b$ is a ``secondary boundary homomorphism",
$\eta_q^*$ is induced by composition with $\eta_q$
and the unlabeled homomorphism is another Hurewicz homomorphism.
This sequence can also be derived from the Atiyah-Hirzebruch spectral sequence
for $\pi_*^{st}(X')$,
which gives another exact sequence
\begin{equation}
\begin{CD}
0\to{H}_q(X';\pi_2^{st})\to\Pi=\pi_{q+2}^{st}(X')@>\theta>>{H}_{q+1}(X';\pi_1^{st})\to0.
\end{CD}
\end{equation}
These are sequences of $\Lambda$-modules, by the naturality of
the spectral sequence.

In our situation $H_{q+2}(X';\mathbb{Z})=0$ and the right-hand portion of
sequence (2) reduces to a short exact sequence of $\Lambda$-modules
\[
0\to{A/2A}\to\widetilde\pi_{q+1}\to{B}\to0.
\]
The group $\widetilde{\pi}_{q+1}$ is stable: $\pi_{q+1}^{st}(X')\cong\pi_{q+1}(X')$,
since $X'$ is highly connected.
There is an analogous sequence when $q=2$, involving the universal quadratic
functor $\Gamma(A)$ instead of $A/2A$.
In this case $\widetilde{\pi}_{q+1}=\pi_3(X)$ is not a stable homotopy group,
there are nontrivial Whitehead products in the image of $\Gamma(A)$
and $\eta$ has infinite order.
(See \cite{HK98} for invariants for simple 4-knots.)

We may identify $H_q(X';\pi_1^{st})$ and $H_q(X';\pi_2^{st})$ with $A/2A$,
since $H_{q-1}(X';\mathbb{Z})=0$ and $\pi_1^{st}=\pi_2^{st}=Z/2Z$.
Composition (on the right) with $\eta_{q+1}$ induces a homomorphism 
$\eta_{q+1}^*:\widetilde\pi_{q+1}\to\Pi$, which factors through 
$\widetilde\pi_{q+1}/2\widetilde\pi_{q+1}$, since $\eta_{q+1}$ has order 2. 
Reduction {\it mod} (2)  induces a monomorphism $\rho_2$
from $B/2B$ with cokernel $Tor(A,Z/2Z)$, 
by the Universal Coefficient Theorem for homology.
Together, the sequences (2) and (3) give a commutative diagram 
\begin{equation}
\begin{CD}
0@>>>{A/2A}@>\eta_q^*>>\widetilde{\pi}_{q+1}/2\widetilde{\pi}_{q+1}@>>>{B/2B}@>>>0\\
@VVV  @VV=V     @VV\eta_{q+1}^*V        @VV\rho_2V                    @VVV \\
0@>>>{A/2A}@>\alpha>>\Pi@>\theta>>{H}_{q+1}(X';\mathbb{F}_2)@>>>0.
\end{CD}
\end{equation}

The chain complex of the covering space $X'$ is naturally a complex of 
$\Lambda$-modules, 
and is chain homotopy equivalent to a finitely generated free complex $C_*$.
As $H_j(C_*)=0$ if $j>q+2$ and $p.d._\Lambda{H_{q+1}}(C_*)\leq1$
we may assume that $C_*$ is the direct sum of the standard 
resolution of the augmentation $\Lambda$-module $\mathbb{Z}$ and a 
simple $\Lambda$-complex $C_*^{rel}$ concentrated in degrees $[q,q+2]$.
(This is homotopy equivalent to the equivariant chain complex of the pair
$(X,S^1)$ where $S^1$ is the meridian.)
Therefore $X$ is homotopy equivalent to a CW complex of dimension $q+2$ 
\cite{Wa65}.

Since $\pi_1(X)\cong\mathbb{Z}$ and $\pi_j(X)=0$ for $1<j<q$ 
the Postnikov $q$-stage of $X$ is the generalized Eilenberg-Mac Lane space
$L_{\mathbb{Z}}(A,q)$ determined by the $\Lambda$-module $A$.
(See page 214 of \cite{Ba89}.)
Let $\kappa(X)\in{H}^{q+2}(L_{\mathbb{Z}}(A,q);\widetilde{\pi}_{q+1})$ 
be the $k$-invariant for the next stage $P_{q+1}(X)$.
Since $H_{q+1}(K(A,q);\mathbb{Z})=0$ and $H_{q+2}(K(A,q);\mathbb{Z})\cong{A/2A}$,
by the Whitehead sequence for $K(A,q)$,
the Universal Coefficient spectral sequence for $L_{\mathbb{Z}}(A,q)$ 
with coefficients $\widetilde\pi_{q+1}$ gives an exact sequence
\begin{equation}
0\to{Ext}^2_\Lambda(A,\widetilde\pi_{q+1})\to
{H^{q+2}(L_{\mathbb{Z}}(A,q);\widetilde\pi_{q+1})}\to
{Hom}_\Lambda(A/2A,\widetilde\pi_{q+1})\to0.
\end{equation}
Since $H_{q+1}(K(A,q);\mathbb{Z})=0$,  
evaluation defines an isomorphism
\[
ev:H^{q+2}(K(A,q);\pi_{q+1}(X'))\to{Hom_\mathbb{Z}(A/2A,\pi_{q+1}(X'))}.
\]
Let $b_A$ be the secondary boundary homomorphism in degree $q+2$ for $K(A,q)$.

\begin{lemma}
The image $e$ of $\kappa(X)$ in $Hom_\Lambda(A/2A,\widetilde\pi_{q+1})$ 
is  $\eta_q^*b_A$.
It is a monomorphism and $p.d._\Lambda\mathrm{Cok}(e)\leq1$,
while $h_\#(\kappa(X))=\kappa(C_*)\in{Ext}^2_\Lambda(A,B)$.
\end{lemma}

\begin{proof}
Since ${Hom}_\Lambda(A/2A,\widetilde\pi_{q+1})\leq
{Hom}_\mathbb{Z}(A/2A,\pi_{q+1}(X'))$ and $\eta_q^*b_A$ is $\Lambda$-linear,
it shall largely suffice to work with $X'$.
We may reduce further to $P_{q+1}(X')$, since
the canonical map  $f:X'\to{P_{q+1}(X')}$ is $(q+2)$-connected, 
so $H_{q+1}(f)$ is an isomorphism and $H_{q+2}(P_{q+1}(X');\mathbb{Z})=0$,
and $f$ induces an isomorphism of the (truncated) Whitehead sequences 
for $X'$ and $P_{q+1}(X')$. 

The homomorphism $b_A$ is an isomorphism, 
by exactness of the Whitehead sequence for $K(A,q)$.
On comparing the definitions of $b_A$ and $\kappa(X')$,
as  in Theorems XII.3.12 and IX.2.2 of \cite{GWW}, respectively, 
we see that $ev(\kappa(X'))=\eta_q^*b_A$.
This is the image of $e$ under the inclusion
${Hom}_\Lambda(A/2A,\widetilde\pi_{q+1})\leq
{Hom}_\mathbb{Z}(A/2A,\pi_{q+1}(X'))$, and so $e$ is a monomorphism,
and $\mathrm{Cok}(e)\cong\mathrm{Cok}(\eta_q^*)=B$.
Hence $p.d._\Lambda\mathrm{Cok}(e)\leq1$.

The Universal Coefficient spectral sequence for $L_{\mathbb{Z}}(A,q)$ 
with coefficients $B$ gives an isomorphism 
${Ext}^2_\Lambda(A,B)\cong{H}^{q+2}(L_{\mathbb{Z}}(A,q);B)$,
since $B$ is $\mathbb{Z}$-torsion free.
Hence $h_\#(\kappa(X))=\kappa(C_*)$.
\end{proof}

The invariants discussed thus far are enough to characterize such knot exteriors. 

\begin{theorem}
Let $A$ and $\widetilde{\pi}_{q+1}$ be finitely generated $\Lambda$-torsion
modules and $\kappa\in{H}^{q+2}(L_{\mathbb{Z}}(A,q);\widetilde{\pi}_{q+1})$.
Suppose that the image of $\kappa$ in ${Hom}_\Lambda(A/2A,\widetilde\pi_{q+1})$
is a monomorphism with cokernel $B$,
and that $p.d._\Lambda{B}\leq1$.
Then there is a finite $(q+2)$-complex
$X_\kappa$ such that $\pi_1(X_\kappa)\cong\mathbb{Z}$,
$\pi_q(X_\kappa)\cong{A}$, $\pi_{q+1}(X_\kappa)\cong\widetilde{\pi}_{q+1}$,
$\widetilde{H}_j(X_\kappa;\Lambda)=0$ if $j\not=q$ or $q+1$,
and $\kappa(X)=\kappa$.
The homotopy type of $X_\kappa$ is determined by $A$, 
$\widetilde{\pi}_{q+1}$  and $\kappa$.
\end{theorem}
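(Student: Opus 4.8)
The plan is to realise the data in two stages and then establish uniqueness by obstruction theory in the metastable range $q+2\le 2q-1$ (valid since $q>2$). First I would build the $(q+1)$-st Postnikov section carrying the prescribed homotopy groups and $k$-invariant: regard $\kappa$ as the homotopy class of a map $L_Z(A,q)\to L_Z(\widetilde{\pi}_{q+1},q+2)$ and let $Y$ be its homotopy fibre. By construction $\pi_1(Y)\cong Z$, $\pi_q(Y)\cong A$, $\pi_{q+1}(Y)\cong\widetilde{\pi}_{q+1}$, and the first $k$-invariant of $Y$ is $\kappa$. Since $Y'$ is $(q-1)$-connected and $q>2$, the analogue of the exact sequence (2) (which holds for any $(q-1)$-connected space, by Theorem XII.3.12 of \cite{GWW}) applies to $Y'$; its homomorphism $\eta_q^*$ is precisely the image of $\kappa$ in $Hom_\Lambda(A/2A,\widetilde{\pi}_{q+1})$, which by hypothesis is injective with cokernel $B$. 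Reading off the low-degree terms then gives $H_q(Y;\Lambda)\cong A$ and $H_{q+1}(Y;\Lambda)\cong B$. (The higher homology of $Y$ need not vanish, as $Y$ is only a Postnikov section.)

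To produce a finite model, note that the data determine $B=\mathrm{coker}(\eta_q^*)$ and, via the quotient $h:\widetilde{\pi}_{q+1}\to B$, the class $\kappa(C_*)=h_*\kappa\in Ext^2_\Lambda(A,B)$. As $A$ and $B$ are finitely generated $\Lambda$-torsion modules and $p.d._\Lambda B\le1$, Theorem 1(2) supplies a simple $\Lambda$-complex $C_*$ concentrated in $[q,q+2]$ with these invariants. Adjoining the standard resolution of the augmentation module and realising the resulting finitely generated free $\Lambda$-complex geometrically yields a finite $(q+2)$-dimensional CW complex $X_\kappa$ with $\pi_1\cong Z$, equivariant chain complex $C_*$, and hence $\pi_q(X_\kappa)\cong A$ and $\widetilde{H}_j(X_\kappa;\Lambda)=0$ for $j\ne q,q+1$. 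Because $H_{q+2}(X_\kappa';\mathbb{Z})=H_{q+2}(C_*)=0$, the sequence (2) for $X_\kappa$ collapses to a short exact sequence $0\to A/2A\to\pi_{q+1}(X_\kappa)\to B\to0$, so $\pi_{q+1}(X_\kappa)$ is already an extension of $B$ by $A/2A$ of the same shape as $\widetilde{\pi}_{q+1}$.

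The step I expect to be the main obstacle is matching the \emph{full} $k$-invariant. Realising $C_*$ only pins down the chain-level class $\kappa(C_*)$, i.e. the image of $\kappa(X_\kappa)$ under $h_*$; the sequence (5) shows that $\kappa$ carries in addition the $Hom$-component $\eta_q^*$ and a lift of $\kappa(C_*)$ to $\widetilde{\pi}_{q+1}$-coefficients, which together fix both the extension class of $\pi_{q+1}$ in $Ext^1_\Lambda(B,A/2A)$ and the attaching data of the top cells. One must show that the freedom in the attaching maps of the $(q+2)$-cells of $X_\kappa$ -- those that differ by elements of $\pi_{q+1}$ dying in $\Lambda$-homology -- sweeps out exactly the coset of lifts described by (5), so that $X_\kappa$ may be chosen with $\pi_{q+1}(X_\kappa)\cong\widetilde{\pi}_{q+1}$ and $\kappa(X_\kappa)=\kappa$; this is the point where the injectivity of $\eta_q^*$ and $p.d._\Lambda B\le1$ are genuinely used. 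Equivalently, one constructs a map $X_\kappa\to Y$ and shows it can be arranged to be $(q+2)$-connected.

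For uniqueness, let $X$ be any finite $(q+2)$-complex with the stated invariants. Its equivariant chain complex is a simple $\Lambda$-complex; since $\widetilde{H}_{q+2}(X;\Lambda)=0$, the sequence (2) gives $H_{q+1}(X;\Lambda)\cong\mathrm{coker}(\eta_q^*)=B$, while $h_*\kappa(X)=\kappa(C_*)$, so by Theorem 1(1) this chain complex is chain homotopy equivalent to $C_*$. As $\dim X\le q+2\le 2q-1$ and $X$ is built from cells in dimensions $\le q+2$, the homotopy type of $X$ is determined by its chain homotopy type together with the single $k$-invariant $\kappa$: the attaching maps of the top cells are classified by $\pi_{q+1}$ of the $(q+1)$-skeleton, and in this metastable range two realisations agreeing on both the chain complex and $\kappa$ differ only by a self homotopy equivalence. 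Hence $X\simeq X_\kappa$, and the homotopy type is determined by $A$, $\widetilde{\pi}_{q+1}$ and $\kappa$.
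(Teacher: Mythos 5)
Your construction of the Postnikov $(q+1)$-stage and your identification of $H_q$, $H_{q+1}$ via sequence (2) match the paper's starting point, but your existence argument then takes a detour that leaves a genuine gap --- one you candidly flag yourself. You realise the chain complex $C_*$ first (via Theorem 1(2)) and only afterwards try to arrange that the resulting space has the prescribed $\pi_{q+1}$ and the full $k$-invariant $\kappa$, reducing this to an unproved claim that varying the attaching maps of the $(q+2)$-cells ``sweeps out exactly the coset of lifts described by (5)''. That is precisely the hard point, and it is not established. The paper avoids it entirely by working in the opposite order: take the finite $(q+2)$-skeleton $P^{[q+2]}$ of the Postnikov stage $P$ itself (finite because $A$, $\widetilde{\pi}_{q+1}$ are finitely generated and $\Lambda$ is noetherian). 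The inclusion $P^{[q+2]}\subset P$ is $(q+2)$-connected, so this skeleton automatically carries the correct $\pi_q$, $\pi_{q+1}$ \emph{and} the correct $k$-invariant; there is nothing left to match. The hypotheses are then used exactly where they belong: injectivity of $\eta_q^*$ forces the boundary map $H_{q+2}(P^{[q+2]};\mathbb{Z})\to A/2A$ in sequence (2) to vanish, so the Hurewicz homomorphism in degree $q+2$ is onto, and $p.d._\Lambda B\leq 1$ makes $H_{q+2}(P^{[q+2]};\Lambda)$ a finitely generated free module; attaching $(q+3)$-cells along a basis kills it without disturbing lower homotopy, and Wall's finiteness theorem compresses the result back to a finite $(q+2)$-complex $X_\kappa$.

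Your uniqueness argument has a similar soft spot. Invoking Theorem 1(1) correctly pins down the equivariant chain homotopy type, but the final step --- that two $(q+2)$-complexes agreeing on the chain complex and on $\kappa$ ``differ only by a self homotopy equivalence'' in the metastable range --- is asserted rather than proved, and it is essentially the statement of the theorem. The paper's argument is both shorter and complete: given any $(q+2)$-complex $Y$ with Postnikov map $f:Y\to P$ and $H_{q+2}(Y;\Lambda)=0$, one constructs $P$ from $X_\kappa$ by attaching cells of dimension $\geq q+3$, so $f$ factors through a map $g:Y\to X_\kappa$ by cellular approximation; $g$ induces isomorphisms on $\pi_1$ and on the homology of the universal (equivalently, infinite cyclic) covers, hence is a homotopy equivalence by the Hurewicz and Whitehead theorems. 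I would encourage you to rebuild your proof around the skeleton-of-the-Postnikov-stage idea; it eliminates both gaps at once.
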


\begin{proof} 
These invariants determine a Postnikov $(q+1)$-stage $P=P_{q+1}(\kappa)$.
We may assume the $k$-skeleton $P^{[k]}$ is finite for all $k\geq0$, 
since $A$ and $\widetilde{\pi}_{q+1}$ are finitely generated $\Lambda$-modules 
and $\Lambda$ is noetherian.
The hypothesis on $\kappa$ implies that
$H_{q+1}(P^{[q+2]};\Lambda)=H_{q+1}(P;\Lambda)\cong{B}$ and that composition 
with $\eta_q$ determines a monomorphism from $A/2A$ to $\widetilde{\pi}_{q+1}$.
Hence the Hurewicz homomorphism for $P^{[q+2]}$ in degree $q+2$ is onto.
Since $p.d._\Lambda{B}\leq1$ and $P^{[q+2]}$ is a finite $(q+2)$-complex
$H_{q+2}(P^{[q+2]};\Lambda)$ is a finitely generated free module.
Attaching $(q+3)$-cells to $P^{[q+2]}$ along representatives for a basis 
of $H_{q+2}(P^{[q+2]};\Lambda)$ gives a finite $(q+3)$-complex 
with trivial homology in degrees $\geq{q+2}$,
and which is therefore homotopy equivalent to a finite $(q+2)$-complex
$X_\kappa$, by \cite{Wa65}.

Let $Y$ be a $(q+2)$-complex with Postnikov $(q+1)$-stage 
$f:Y\to{P}$ and such that $H_{q+2}(Y;\Lambda)=0$.
Then $f$ factors through a map $g:Y\to{X_\kappa}$,
since we may construct $P$ by adjoining cells of dimension $\geq{q+3}$ to
$X_\kappa$, to kill the higher homotopy groups.
The lift of $g$ to the universal covers induces isomorphisms on homology
and so is a homotopy equivalence,
by the Hurewicz and Whitehead Theorems.
Therefore $g$ is a homotopy equivalence.
\end{proof}

It is easy to construct a finite $(q+2)$-complex with 
equivariant chain complex of the required chain homotopy type, 
using only the surjectivity of the Hurewicz homomorphism in degree $q+1$
(and not appealing to \cite{Wa65}).

However not all such complexes admit Poincar\'e duality isomorphisms. 
Let $D_*=D(C^{rel})[-2q-2]_*$.
It is clear from Theorem 1 that there is a chain homotopy equivalence
$D_*\simeq{C_*^{rel}}$ if and only if $k(A)$ and $\kappa(C_*)$ correspond.

An analogous but simpler argument shows that if $K$ is a simple $(2q-1)$-knot
the homotopy type of $X=X(K)$ is determined by the module $A=H_q(X;\Lambda)$
alone.

In \S6 below we shall see that the invariants of Theorem 4 
are determined by the module $A$ and the homomorphism $\eta_q^*$,
when $T(A)$ is odd.
 
\section{Farber quintuples and Kearton $F$-forms}

In this section we shall define the Farber quintuple for simple $2q$-knots.
This reduces to the torsion linking pairing in the odd finite case, 
and to an $F$-form in the torsion-free case.
These cases were studied slightly earlier \cite{Ke83,Ko79}.
However, in the torsion-free case it is not clear that the $F$-form 
given by the Farber quintuple is the same as the one used in \cite{Ke83}.
In other, earlier work Kearton and Kojima applied different systems of invariants 
to the odd torsion case and to torsion-free, fibred simple knots, 
respectively \cite{Ke76,Ko77}.
Central to most of these approaches is the homotopy linking pairing
\[
\mathcal{L}_V:\pi_{q+1}(V)\times\pi_{q+1}(V)\to\pi_1^{st}=Z/2Z,
\]
for $V$ a $(q-1)$-connected Seifert hypersurface for $K$.
(The group $\pi_{q+1}(V)$ is stable,
and homotopy classes $u,v\in\pi_{q+1}(V)$ 
can be represented by embedded spheres,
since $V$ is highly connected.
These are unknotted in $S^{2q+2}$, since they have codimension $q+1>2$,
and so the pushoff of $v$ defines an element of 
$\pi_{q+1}(S^{2q+2}\setminus{u(S^{q+1})})=\pi_1^{st}$.)
The invariants of \cite{Ke76,Ko77} involve complicated equivalence relations,
and the connection with Farber's work is again not clear.

An {\it $F$-form} is a triple $(A,\mathcal{E},[-,-])$,
where $A$ is a finitely generated $\Lambda$-module with $T(A)$ odd 
and $t-1$ acting invertibly, 
$\mathcal{E}$ is an exact sequence of $\Lambda$-modules
\[
\mathcal{E}: 0\to{A/2A}\to\Pi\to{B/2B}\to0,
\]
and $[-,-]:\Pi\times\Pi\to\mathbb{F}_2$ is a nonsingular pairing on which $t$
acts isometrically, and such that the image of $A/2A$ is self-annihilating 
($[a,b]=0$ for all $a,b\in{A/2A}$) and the induced pairing  
$\langle-,-\rangle:A/2A\times{B/2B}\to\mathbb{F}_2$ is nonsingular.
In particular, $\Pi$ has exponent 2 and $|A/2A|=|B/2B|$.
(We have used the natural transformation $\chi_\Pi$ to modify Kearton's formulation;
he used a nonsingular hermitean pairing into $\mathbb{F}_2(t)/\mathbb{F}_2\Lambda$
instead.)

The $F$-form associated to a simple $2q$-knot $K$ has $A=H_q(X;\Lambda)$,
$\Pi=\widetilde\pi_{q+1}/2\widetilde\pi_{q+1}$ and $B=H_{q+1}(X;\Lambda)$,
and the sequence $\mathcal{E}$ is the {\it mod} (2) reduction of sequence (2) above.
The pairing $[-,-]$ defined by Kearton is defined for any simple $2q$-knot with $q\geq3$,
and is nonsingular if $T(A)$ is odd.
(The latter hypothesis is needed only for Lemmas 1.10 and 1.13 of \S1 of \cite{Ke83}.)
The pairing $\langle-,-\rangle$ is the Milnor duality pairing for $X'$ 
with coefficients $\mathbb{F}_2$, if $T(A)$ is odd.
The $F$-form is a complete invariant for torsion-free simple $2q$-knots 
with $q\geq4$  \cite{Ke83}; 
this was extended to torsion-free simple 6-knots in \cite{HK88}.

A {\it Farber quintuple} $(A,\Pi,\alpha, \ell,\psi)$ (in dimension $q$) consists of  a pair 
of finitely generated $\Lambda$-modules $A$ and $\Pi$ such that $t-1$ acts invertibly on $A$,
a monomorphism $\alpha:A/2A\to\Pi$ and nonsingular $(-1)^{q+1}$-symmetric
pairings 
\[
\ell:T(A)\times{T(A)}\to\mathbb{Q}/\mathbb{Z}\quad\mathrm{and}\quad 
\psi:{\Pi\times\Pi}\to{Z/4Z}
\] 
on which $t$ acts isometrically.
In other words the {\it adjoint} functions $Ad(\ell)$ and $Ad(\psi)$ are 
$\Lambda$-isomorphisms,
where $Ad(\ell):T(A)\to{Hom}_\mathbb{Z}(T(A),\mathbb{Q}/\mathbb{Z})$ 
is given by $Ad(\ell)(b)(a)=\ell(a,b)$ for all $a,b\in{T(A)}$,
and $Ad(\psi)$ is defined similarly.

Let $\beta=\alpha^*Ad(\psi):\Pi\to{Hom}_{\mathbb{F}_2}(A/2A,\mathbb{F}_2)$,
so that $\beta(p)(a)=\psi(p,\alpha(a))$ for all $a\in{A/2A}$ and $p\in\Pi$.
Then $\beta$ is an epimorphism and $\mathrm{Ker}(\beta)=\mathrm{Im}(\alpha)$.
These pairings interact in the following way.
If $M$ is a $\Lambda$-module and $m\in{M}$
let $[m]_2$ be the image of $m$ in $M/2M$.
Let $\gamma:\Pi\to{T(A)}$ be the homomorphism determined by 
$\psi(p,\alpha([x]_2))=\ell(\gamma(p),x)$ for all $x\in{T(A)}$ and $p\in\Pi$
and nonsingularity of $\ell$.
Then $\alpha([\gamma(p)]_2)=2p$, for all $p\in\Pi$.
In particular, Farber's $\Pi$ has exponent 4,
and $\gamma=0$ if and only if $T(A)$ has odd order.

The  Farber quintuple of $K$ is $q_F(K)=(A,\Pi,\alpha, \ell,\psi)$, where
$A=H_q(X;\Lambda)$, $\Pi=\pi_{q+2}^{st}(X')$,
$\alpha:A/2A\to\Pi$ is the monomorphism determined by composition with 
$\eta_q\eta_{q+1}$,
$\ell:T(A)\times{T(A)}\to\mathbb{Q}/\mathbb{Z}$ is the torsion linking pairing 
and $\psi:{\Pi\times\Pi}\to{Z/4Z}<\pi_3^{st}$ is a bilinear pairing derived from 
Spanier-Whitehead duality for a $(q-1)$-connected Seifert hypersurface $V$ 
for $K$ \cite{Fa84}.

In outline, Farber's construction of $\psi$ is as follows.
Let $N(V)$ be a regular neighbourhood of $V$ in $S^{2q+2}$ 
and let $Y=S^{2q+2}\setminus{N(V)}$.
We may assume that $V\cup{Y}\subset\mathbb{R}^{2q+2}=S^{2q+2}\setminus\{\infty\}$.
Then $(v,y)\mapsto\frac{v-y}{||v-y||}$ defines a map from $V\times{Y}$ 
to $S^{2q+1}$ which is nullhomotopic on $V\vee{Y}$ and hence
induces a (Spanier-Whitehead) map $SW_V:V\wedge{Y}\to{S}^{2q+1}$.
The orientations for $K$ and $S^{2q+2}$ determine an orientation for $V$ and
an oriented normal vectorfield,
and hence a preferred pushoff map $i_+:V\to{Y}$.
Let $u:V\wedge{V}\to{S^{n+1}}$ and $z:V\to{V}$ be stable maps
corresponding to the duality pairing and carving map of \cite{Fa84}, 
respectively.
Let 
\[\psi_V(x,y)={u\circ(x\wedge{y})}\in\pi_{2q+4}^{st}(S^{2q+1})=
\pi_3^{st}\cong{Z/24Z}
\]
for all $x,y\in\pi_{q+2}^{st}(V)$.
Let $P=\mathbb{Z}[z]$, $\bar{z}=1-z$ and $L=P[(z\bar{z})^{-1}]$.
If we identify $t\in\Lambda$ with $1-z^{-1}\in{L}$
then we also have $L=\Lambda[(1-t)^{-1}]$.
Then $\pi_{q+2}^{st}(X')\cong{L}\otimes_P\pi_{q+2}^{st}(V)$ and $\psi_V$ 
extends to a pairing $\psi$ on $\pi_{q+2}^{st}(X')$ which is independent of 
the choice of Seifert hypersurface $V$.
(See \S10 of \cite{Fa84}.)

Suppose now that $T(A)$ has odd order.
Then $\Pi$ has exponent $2$, $\ell$ and $\psi$ are independent and $\psi$ is symmetric. 
In this case, a Farber quintuple is algebraically equivalent to an $F$-form together
with a torsion linking pairing.
If $A=2A$ then $\Pi=0$ and $q_F(K)$ reduces to the pair $(A,\ell)$.
In particular, if $A$ is finite this gives Kojima's classification of
odd finite simple $2q$-knots  \cite{Ko79}.
If $A$ is torsion-free $q_F(K)$ reduces to an $F$-form.
We can show that the exact sequence $\mathcal{E}$ of this reduction agrees
with that of Kearton, but it is not clear how the pairings $\psi$ and $[-,-]$ correspond.

Farber's construction gives an analogous pairing $\Psi$ on 
$\pi_{q+1}^{st}(X')\cong{L}\otimes_P\pi_{q+1}^{st}(V)$.
Let 
\[\Psi_V(a,b)={u\circ(a\wedge{b})}\in
\pi_{2q+2}^{st}(S^{2q+1})=\pi_1^{st}={Z/2Z}
\] 
for all $a,b\in\pi_{q+1}^{st}(V)$.
Then $\Psi_V$ extends to a pairing $\Psi$ on $\pi_{q+1}^{st}(X')$, 
which is again independent of the choice of Seifert hypersurface $V$.

\begin{lemma}
If $T(A)$ has odd order then 
$\widetilde\pi_{q+1}/2\widetilde\pi_{q+1}\cong\Pi=\pi_{q+2}^{st}(X')$,
and $\psi$ and $\Psi$ are equivalent.
\end{lemma}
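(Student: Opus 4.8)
The plan is to derive both assertions from the commutative diagram (4) of short exact sequences and from Farber's construction of the two pairings at the level of a Seifert hypersurface.

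First I would establish the isomorphism by a five-lemma argument. When $T(A)$ has odd order the $2$-torsion submodule $A[2]\le T(A)$ is trivial, so $Tor(A,Z/2Z)=0$ and the right-hand vertical map $\rho_2:B/2B\to H_{q+1}(X';\mathbb{F}_2)$ of diagram (4) is an isomorphism (it is always a monomorphism with cokernel $Tor(A,Z/2Z)$). Since the left-hand vertical map is the identity of $A/2A$ and both rows are short exact, the five lemma forces the middle map $\eta_{q+1}^*:\widetilde\pi_{q+1}/2\widetilde\pi_{q+1}\to\Pi$ to be an isomorphism. Here I use the identification $\widetilde\pi_{q+1}\cong\pi_{q+1}^{st}(X')$ coming from the high connectivity of $X'$, so that $\eta_{q+1}^*$ is genuinely right composition with the stable Hopf map.

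Next I would compare the pairings over a $(q-1)$-connected Seifert hypersurface $V$, where both are obtained by smashing with Farber's duality map $u$. For $\alpha,\beta\in\pi_{q+1}^{st}(V)$ the factorization $(\alpha\eta_{q+1})\wedge(\beta\eta_{q+1})=(\alpha\wedge\beta)\circ(\eta_{q+1}\wedge\eta_{q+1})$ together with the stable identity $\eta\wedge\eta=\eta^2\in\pi_2^{st}$ gives
\[
\psi_V(\alpha\eta_{q+1},\beta\eta_{q+1})
=\bigl(u\circ(\alpha\wedge\beta)\bigr)\circ(\eta_{q+1}\wedge\eta_{q+1})
=\Psi_V(\alpha,\beta)\cdot\eta^2 .
\]
Writing $\Psi_V(\alpha,\beta)=\epsilon\eta$ with $\epsilon\in Z/2Z$, the right-hand side is $\epsilon\eta^3$, and $\eta^3$ is the unique element of order $2$ in $\pi_3^{st}\cong Z/24Z$, namely the $2$-torsion of the subgroup $Z/4Z<\pi_3^{st}$ in which $\psi$ takes its values. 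Thus $\psi_V$, restricted to the image of right composition with $\eta_{q+1}$, is carried exactly onto $\Psi_V$ by the injection $\cdot\eta^2:\pi_1^{st}\to\pi_3^{st}$, whose image is $\{0,\eta^3\}$.

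Finally I would promote this identity from $V$ to $X'$. Right composition with $\eta_{q+1}$ commutes with the carving self-map $z$ (the former acts on the source sphere, the latter on the target $V$), so it is $P$-linear and localizes over $L\otimes_P-$ to $\eta_{q+1}^*$; and the displayed relation respects the conjugation $z\mapsto\bar z=1-z$, since the two sesquilinearities of $\psi_V$ and of $\Psi_V$ match after multiplication by the fixed class $\eta^2$. As $\pi_{q+1}^{st}(X')=L\otimes_P\pi_{q+1}^{st}(V)$, uniqueness of the $L$-sesquilinear extension then yields $\psi(\eta_{q+1}^*\alpha,\eta_{q+1}^*\beta)=\Psi(\alpha,\beta)\cdot\eta^2$ on all of $\pi_{q+1}^{st}(X')$; since $\eta_{q+1}^*$ is an isomorphism and $\cdot\eta^2$ is injective onto $\{0,\eta^3\}$, the forms $\psi$ and $\Psi$ are equivalent. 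I expect the main obstacle to be precisely this promotion step: one must check that Farber's localized extensions of $\psi_V$ and $\Psi_V$ intertwine correctly with right composition by $\eta_{q+1}$ and with multiplication by the fixed class $\eta^2$, rather than merely agreeing on the image of $\pi_{q+1}^{st}(V)$.
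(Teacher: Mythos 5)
Your proposal is correct and follows essentially the same route as the paper: the isomorphism $\eta_{q+1}^*$ is deduced from $Tor(A,Z/2Z)=0$ forcing $\rho_2$ in diagram (4) to be an isomorphism, and the isometry statement rests on $\eta\wedge\eta=\eta^2$ together with injectivity of right composition with $\eta^2$ from $\pi_1^{st}$ to $\pi_3^{st}$. The only difference is that you spell out the descent from $\Psi_V$, $\psi_V$ to the localized pairings on $\pi_*^{st}(X')\cong L\otimes_P\pi_*^{st}(V)$, a step the paper's proof leaves implicit.
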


\begin{proof}
Since $T(A)$ has odd order, $Tor(A,Z/2Z)=0$,
and so the homomorphism  $\rho_2$ in diagram (4) is an isomorphism.
Hence $\eta^*_{q+1}:\widetilde\pi_{q+1}/2\widetilde\pi_{q+1}\cong\Pi$
is also an isomorphism.
Composition on the right with $\eta^2$ induces a monomorphism from $\pi_1^{st}$ 
to $\pi_3^{st}$,
and  $\eta\wedge\eta=\eta^2\in\pi_2^{st}$, by X.8.12 of \cite{GWW}.
Hence $\Psi(a,b)=\psi(a\eta_{q+1},b\eta_{q+1})$ for all 
$a,b\in\pi_{q+1}^{st}(X')$, and so $\eta^*_{q+1}$ is an isometry.
\end{proof}

We expect that  $\Psi=[-,-]$.
If so, then the classifications of torsion-free simple $2q$-knots by Farber and Kearton 
are equivalent.

The $F$-form and the torsion linking pairing have the advantage 
of being defined in terms of invariants of the exterior.
Are there direct definitions of $\psi$ or $\Psi$
which does not involve the choice of a Seifert hypersurface?
For instance, can they be defined directly in terms of 
a $\mathbb{Z}$-equivariant SW duality?
(See also \cite{Fa81} for the case of fibred simple $2q$-knots.)

These invariants are defined for $q\geq1$, 
but simple $2$-knots are topologically trivial, 
and the natural invariants for simple $4$-knots are not stable,
so it is unlikely that the Farber quintuple 
is a complete invariant when $q=2$.
(See instead \cite{HK98}.)
It may however suffice when $q=3$.
There is an obvious notion of sum of Farber quintuples,
and $q_F(K\#\widetilde{K})=q_F(K)\oplus{q_F(\widetilde{K})}$.
Can the Farber invariant be extended to all even-dimensional knots, 
as an additive invariant?

\section{Recovering the homotopy type of $X$ from $q_F(K)$}

The first three ingredients of $q_F(K)$ are determined by the homotopy type of
$X$ alone, while $\ell$ and $\psi$ are manifestations of Poincar\'e duality
for the pair $(X,\partial{X})$.
We shall show that, conversely,
$q_F(K)$ determines $\widetilde\pi_{q+1}$ and $\kappa(X)$ and hence the
homotopy type of $X$, at least if $T(A)$ has odd order.
We shall consider how the duality pairings interact with the
inclusion of $\partial{X}$ into $X$ in the next section.

Let $pd_Z:B=H_{q+1}(X;\Lambda)\to{e^1A}$
and $pd_2:H_{q+1}(X;\mathbb{F}_2\Lambda)\to{E(A/2A)}=
\overline{Ext_{\mathbb{F}_2\Lambda}^1(A/2A,\mathbb{F}_2\Lambda)}$
be the isomorphisms determined by Poincar\'e duality and the Universal
Coefficient spectral sequence,
and let $\tau:E(A/2A)\cong{Hom}_{\mathbb{F}_2}(A/2A,\mathbb{F}_2)$ 
be the isomorphism defined in \S2.
Reduction {\it mod} (2) defines a homomorphism
$\rho:B\to{H_{q+1}(X;\mathbb{F}_2\Lambda)}$.
It  also induces a natural isomorphism $\mathbb{F}_2\otimes_\mathbb{Z}e^1(A/T(A))\cong
{E}(A/(T(A),2A))$, and hence a natural transformation
\[
e:e^1A\cong{e^1}(A/T(A))\to{E}(A/(T(A),2A))\to{E}(A/2A),
\]
and the diagram
\begin{equation}
\begin{CD}
B@>>>\overline{H^{q+1}(X;\Lambda)}@>>>{e^1A}\\
@VV{\rho}V  @VVV     @VVeV \\
H_{q+1}(X;\mathbb{F}_2\Lambda)@>>>\overline{H^{q+1}(X;\mathbb{F}_2\Lambda)}
@>>>E(A/2A)
\end{CD}
\end{equation}
is commutative.
(The horizontal maps are all isomorphisms.)

Let $\beta=\alpha^*Ad(\psi):\Pi\to{Hom}_{\mathbb{F}_2}(A/2A,\mathbb{F}_2)$,
as in \S5 above.

\begin{lemma}
The module $\widetilde\pi_{q+1}$ is
the pullback of $\tau{e}$ and $\beta$ over 
$Hom_{\mathbb{F}_2}(A/2A,\mathbb{F}_2)$, i.e., it is the fibre sum 
$\widetilde\pi_{q+1}\cong\{(b,p)\in{e^1A}\times\Pi\mid \tau{e}(b)=\beta(p)\}$.
\end{lemma}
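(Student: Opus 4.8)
The plan is to display both $\widetilde\pi_{q+1}$ and the proposed pullback as extensions of $e^1A\cong B$ by $A/2A$ and to identify them by a five-lemma argument. Since $H_{q+2}(X';\mathbb{Z})=0$, the right-hand portion of sequence (2) is a short exact sequence of $\Lambda$-modules
\[
0\to A/2A\xrightarrow{\eta_q^*}\widetilde\pi_{q+1}\xrightarrow{h}B\to0,
\]
in which $h$ is the Hurewicz homomorphism. Writing $P=\{(b,p)\in e^1A\times\Pi\mid\tau e(b)=\beta(p)\}$ for the candidate pullback, the facts that $\beta$ is onto and $\mathrm{Ker}(\beta)=\mathrm{Im}(\alpha)$ show that projection to the first factor gives a short exact sequence
\[
0\to A/2A\xrightarrow{\iota}P\xrightarrow{\pi_1}e^1A\to0,\qquad\iota(a)=(0,\alpha(a)),
\]
whose base is identified with $B$ by the Poincar\'e duality isomorphism $pd_Z$.

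Next I would form the comparison homomorphism $\Phi=(pd_Z\circ h,\,\eta_{q+1}^*):\widetilde\pi_{q+1}\to e^1A\times\Pi$. The identities $h\eta_q^*=0$ and $\eta_{q+1}^*\eta_q^*=\alpha$, both read off the left-hand square of diagram (4), give $\Phi\eta_q^*=\iota$ and $\pi_1\Phi=pd_Z h$. Hence, once $\Phi$ is known to factor through $P$, it is a morphism between the two short exact sequences above which is the identity on $A/2A$ and $pd_Z$ on the base; the five-lemma then shows $\Phi$ is an isomorphism onto $P$, as required.

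Everything therefore comes down to checking that $\Phi$ takes values in $P$, i.e. that $\tau e\,(pd_Z\,h(x))=\beta(\eta_{q+1}^*(x))$ for all $x$. Here I would use the two commuting diagrams already available: diagram (6) gives $\tau e\circ pd_Z=\tau\,pd_2\circ\rho$ (naturality of Poincar\'e duality under reduction {\it mod} (2)), while the right-hand square of diagram (4) gives $\rho h=\theta\,\eta_{q+1}^*$ (after identifying $H_{q+1}(X';\mathbb{F}_2)$ with $H_{q+1}(X;\mathbb{F}_2\Lambda)$). Both sides then collapse to $\tau\,pd_2\,\rho\,h$, so the desired equality is equivalent to the single identity $\beta=\tau\,pd_2\,\theta$ on $\mathrm{Im}(\eta_{q+1}^*)$.

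This last identity is the main obstacle, and it is exactly where the duality content of $\psi$ enters. Both $\beta$ and $\tau\,pd_2\,\theta$ are epimorphisms $\Pi\to Hom_{\mathbb{F}_2}(A/2A,\mathbb{F}_2)$ with kernel $\mathrm{Im}(\alpha)$, so they differ by an automorphism of the target; the real work is to show that automorphism is trivial. I would do this by unwinding $\beta(p)(a)=\psi(p,\alpha(a))$ and matching it, through the isomorphism $\rho_2$ of diagram (4), with the induced pairing $\langle-,-\rangle:A/2A\times B/2B\to\mathbb{F}_2$, which by \S5 is the Milnor duality pairing of $X'$ with $\mathbb{F}_2$-coefficients---precisely the pairing whose adjoint is $\tau\,pd_2$. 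When $T(A)$ has odd order, $\rho_2$ is an isomorphism and $\eta_{q+1}^*$ is onto $\Pi$, so this comparison is valid on all of $\Pi$ and the identity follows. I expect the bulk of the effort to lie not in this conceptual matching but in reconciling the normalizations of the three duality isomorphisms involved---the residue description of $\tau$ from \S2, the Universal Coefficient isomorphism $pd_2$, and the sign inherent in the $(-1)^{q+1}$-symmetry of $\psi$.
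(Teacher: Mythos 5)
Your formal reduction is exactly the one the paper has in mind: the paper's entire proof is that the lemma ``follows easily from diagrams (4) and (6), since $\beta=\tau\,pd_2\,\theta$'', and your comparison map $\Phi=(pd_Z\,h,\eta_{q+1}^*)$, the verification $\tau e\,pd_Z\,h=\tau\,pd_2\,\rho\,h=\tau\,pd_2\,\theta\,\eta_{q+1}^*$ from diagrams (6) and (4), and the five-lemma argument between the two extensions of $e^1A\cong B$ by $A/2A$ are precisely the diagram chase being alluded to. That part of your proposal is sound, and you correctly isolated $\beta=\tau\,pd_2\,\theta$ as the crux.

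The gap is in your proposed proof of that crux. You want to identify the pairing $(a,p)\mapsto\psi(p,\alpha(a))$ with the pairing $\langle-,-\rangle$ of \S5, which is asserted there to be the Milnor duality pairing of $X'$ with $\mathbb{F}_2$-coefficients. But $\langle-,-\rangle$ in \S5 is induced from Kearton's $F$-form pairing $[-,-]$, not from Farber's $\psi$, and the relation between these two pairings is exactly what the paper declines to establish: it says ``it is not clear how the pairings $\psi$ and $[-,-]$ correspond'' and only \emph{expects} that $\Psi=[-,-]$. So your matching step assumes an identification that is not available from the paper. The identity $\beta=\tau\,pd_2\,\theta$ is instead imported wholesale from Theorem 3.3(b) of \cite{Fa84}, where it is proved via the Spanier--Whitehead duality construction of $\psi$ on a highly connected Seifert hypersurface; it is an external input, not something recoverable from diagrams (4) and (6) together with the $F$-form discussion. (A minor point in your favour: your restriction to $T(A)$ odd in the last step is unnecessary for the five-lemma part, since $\Phi$ lands in the pullback as soon as the identity holds on $\mathrm{Im}(\eta_{q+1}^*)$; the real issue is the provenance of the identity itself, not its domain of validity.)
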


\begin{proof}
This follows easily from diagrams (4) and (6),
since $\beta=\tau{pd_2}\theta$
(see Theorem 3.3(b) of \cite{Fa84}).
\end{proof}

\begin{lemma}
The equivariant chain complex of the universal cover is determined 
up to chain homotopy equivalence by the module $A$.
\end{lemma}

\begin{proof}
As observed above, 
the equivariant chain complex of the universal cover 
is chain homotopy equivalent to a complex $C_*$ which is
the direct sum of the standard resolution of the augmentation $\Lambda$-module 
$\mathbb{Z}$ and a simple $\Lambda$-complex $C_*^{rel}$ concentrated in degrees $[q,q+2]$.
Poincar\'e duality for $(X,\partial{X})$ reduces to a chain homotopy equivalence
$D(C^{rel})[-2q-2]_*\simeq{C_*^{rel}}$, and so it follows from Theorem 1 that
$C_*^{rel}$ and $C_*$ are determined up to chain homotopy equivalence by $A$. 
\end{proof}

\begin{theorem}
If $T(A)$ has odd order the homotopy type of $X$ is determined by $q_F(K)$.
\end{theorem}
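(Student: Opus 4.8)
The plan is to reduce everything to Theorem 3, which recovers the homotopy type of $X$ from the triple $(A,\widetilde\pi_{q+1},\kappa(X))$; it then suffices to extract each of these from $q_F(K)=(A,\Pi,\alpha,\ell,\psi)$. The module $A$ is part of the quintuple. For $\widetilde\pi_{q+1}$ I would invoke Lemma 5, which presents it as the fibre sum of $\tau e$ and $\beta$ over $Hom_{\mathbb{F}_2}(A/2A,\mathbb{F}_2)$: the map $\tau e$ is natural in the $\Lambda$-module $A$, while $\beta=\alpha^*Ad(\psi)$ is assembled from $\alpha$ and $\psi$, so the pullback is already determined by $(A,\Pi,\alpha,\psi)$. (Notably the linking form $\ell$ is not needed for the homotopy type of $X$.) Everything then comes down to the $k$-invariant $\kappa(X)\in H^{q+2}(L_Z(A,q);\widetilde\pi_{q+1})$.

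I would pin down $\kappa(X)$ by its images under two natural maps out of $H^{q+2}(L_Z(A,q);\widetilde\pi_{q+1})$. First, in the extension (5)
\[
0\to Ext^2_\Lambda(A,\widetilde\pi_{q+1})\to H^{q+2}(L_Z(A,q);\widetilde\pi_{q+1})\xrightarrow{\,p\,}Hom_\Lambda(A/2A,\widetilde\pi_{q+1})\to0
\]
the image $p(\kappa(X))$ is the homomorphism $\eta_q^*$, which is exactly the structure map $A/2A\to\widetilde\pi_{q+1}$ of the fibre sum, $a\mapsto(0,\alpha(a))$, and so is visibly recovered from $q_F(K)$. Second, let $h:\widetilde\pi_{q+1}\to B$ be the quotient by $\eta_q^*(A/2A)$. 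Since $B$ is $\mathbb{Z}$-torsion free, $Hom_\Lambda(A/2A,B)=0$, the analogue of (5) with coefficients $B$ collapses to $H^{q+2}(L_Z(A,q);B)\cong Ext^2_\Lambda(A,B)$, and under it $h_*(\kappa(X))=\kappa(C_*)$. By the Poincar\'e-duality self-equivalence $D_*\simeq C_*^{rel}$ together with Theorem 1, this class $\kappa(C_*)$ corresponds to $k(A)$ and hence is determined by $A$. Thus both images $(p(\kappa(X)),h_*(\kappa(X)))=(\eta_q^*,\kappa(C_*))$ are read off from $q_F(K)$.

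It remains to see that these two images determine $\kappa(X)$ uniquely, i.e. that $(p,h_*)$ is injective; this is the one step that uses the hypothesis on $T(A)$, and I expect it to be the only real obstacle. Its kernel is $Ext^2_\Lambda(A,\widetilde\pi_{q+1})\cap\ker h_*$, so it suffices to show that $h_*$ is a monomorphism on $Ext^2_\Lambda(A,\widetilde\pi_{q+1})$. Feeding $0\to A/2A\to\widetilde\pi_{q+1}\to B\to0$ into $Ext^*_\Lambda(A,-)$ and using that $\Lambda$ has global dimension $2$ (so $Ext^3_\Lambda$ vanishes), this reduces to the single vanishing statement $Ext^2_\Lambda(A,A/2A)=0$. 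Here I would use $Ext^2_\Lambda(A,A/2A)\cong Ext^2_\Lambda(T(A),A/2A)$ from \S2: the latter group is killed by $2$, since $A/2A$ has exponent $2$, and also by the order of $T(A)$, which is odd, hence it is zero.

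Putting the pieces together, $q_F(K)$ determines $A$, $\widetilde\pi_{q+1}$ and, through the pair $(\eta_q^*,\kappa(C_*))$, the class $\kappa(X)$, whereupon Theorem 3 yields the homotopy type of $X$. The role of the odd-order hypothesis is precisely the vanishing $Ext^2_\Lambda(A,A/2A)=0$: without it $\gamma\neq0$, $\Pi$ acquires exponent $4$, that $Ext^2$ group can be nonzero, and the residual ambiguity in $\kappa(X)$ would survive, which is why I would not expect the argument to reach $\kappa(X)$ in the general case.
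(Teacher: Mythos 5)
Your argument is correct and follows essentially the same route as the paper: both exploit the extension (5), identify the two components of $\kappa(X)$ as $\eta_q^*$ (hence $\alpha$) and $\kappa(C_*)$ (hence $A$, via Theorem 1 and duality), and use the odd-order hypothesis to see that these two components determine $\kappa(X)$. The paper phrases this last step as a canonical splitting of $H^{q+2}(L_Z(A,q);\widetilde\pi_{q+1})$ into an odd-order summand $Ext^2_\Lambda(A,\widetilde\pi_{q+1})\cong Ext^2_\Lambda(T(A),e^1(A/T(A)))$ and a $2$-primary summand, which is exactly the content of your explicit vanishing $Ext^2_\Lambda(T(A),A/2A)=0$ and the joint injectivity of $(p,h_*)$.
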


\begin{proof}
The quintuple $q_F(K)$ determines $\widetilde\pi_{q+1}$, by Lemma 5.
If $T(A)$ has odd order then $Ext^2_\Lambda(A,A/2A)=Ext^2_\Lambda(T(A),A/2A)=0$.
Hence 
\[Ext^2_\Lambda(A,\widetilde\pi_{q+1})\cong
{Ext}^2_\Lambda(A,B)\cong{Ext}^2_\Lambda(T(A),e^1(A)),
\]
and is the odd-order summand of
$H^{q+2}(L_{\mathbb{Z}}(A,q);\widetilde\pi_{q+1})$.
The image of $\kappa(X)$ in the 2-primary summand is $\eta_q^*b_A$,
by Lemma 3. This corresponds to $\alpha$.
Hence  it follows that $\kappa(X)$ is determined by $\kappa(C_*^{rel})$
and $\alpha$, and so in turn by $A$ and $\alpha$, by Lemma 7.
Thus $q_F(K)$ determines the homotopy type of $X$, by Theorem 4.
\end{proof}

If $T(A)$ is odd $\alpha$ is the composite of the monomorphism $\eta^*_q$ 
with reduction {\it mod} $(2)$ and
$q_F(K)$ is algebraically equivalent to the torsion linking 
pairing $\ell$ together with an $F$-form.
If $A=2A$ then $\Pi=0$, $\widetilde\pi_{q+1}\cong{e^1A}$ and 
$\kappa(X)=\kappa(C_*^{rel})\in{Ext}^2_\Lambda(A,e^1A)$.
If instead $A$ is torsion free ${Ext}^2_\Lambda(A,\widetilde\pi_{q+1})=0$ 
and so $\kappa(C_*^{rel})=0$.
In this case $\kappa(X)$ is determined by $\alpha$,
which lifts to a monomorphism 
$A/2A\cong\{(0,p)\in{B}\times\Pi\mid \beta(p)=0\}\leq\widetilde\pi_{q+1}$
in the above fibre sum.

\section{Inclusion of the boundary}

Let $Y=S^{2q}\times{S^1}$ have the product orientation
and let $J_K:Y\cong\partial{X}\subset{X}$ be the inclusion of the boundary.
The oriented homotopy type of $X$ and the class $[J(K)]$ of $J_K$ modulo 
homotopy and composition with orientation preserving self homotopy equivalences
of $X$ and $Y$ together determine the oriented homotopy type of the pair 
$(X,\partial{X})$.
The difficulty is in giving a practical characterization of $[J(K)]$.
In this section we shall mention two of the known constraints.

Let $J:Y\to{X}$ be a map which induces an isomorphism on fundamental groups.
Let $M(J)$ be the mapping cylinder of $J$.
There is a canonical homotopy equivalence $M(J)\simeq{X}$. 
The orientation for $Y$ determines a generator $\mu_J$ for 
${H}_{2q+2}(M(J),Y;\mathbb{Z})\cong{H}_{2q+1}(Y;\mathbb{Z})$.
Since the universal covers of $M(J)\simeq{X}$ and $Y$ are highly connected
cap product with $\mu_J$ determines homomorphisms 
$\overline{H^{q+1}(X;\Lambda)}\to{H}_{q+1}(X;\Lambda)$
and $\overline{H^{q+2}(X;\Lambda)}\to{H}_q(X;\Lambda)$.
The pair $(M(J),Y)$ is a $PD_{2q+2}$-pair if and only if these
homomorphisms are isomorphisms.
(The corresponding homomorphisms in other degrees are isomorphisms, 
in all cases.)

The map $J':Y'=S^{2q}\times\mathbb{R}\to{X'}$ covering $J$ 
determines an element $\lambda_J\in\pi_{2q}(X')=\pi_{2q}(X)$, 
such that $(t-1)\lambda_J=0$.
In the fibred case the Poincar\'e duality constraints depend only 
on $\lambda_J$.
The longitude $\lambda_K$ corresponds to $J_K$.
Lemma 3 of \cite{HK98} asserted that 
{\sl if $X$ is an $n$-dimensional homology circle with $H_n(X;\Lambda)=0$ 
and two maps $j_1,j_2:S^n\times{S^1}\to{X}$ induce isomorphisms on $\pi_1$ 
and agree on $S^n\vee{S^1}$ then there is a self-homotopy equivalent 
$f:X\to{X}$ such that $j_2\sim{fj_1}$.}
If this were correct it would follow not only that $\lambda_K$ determines $J_K$ 
for 1-simple knots but also that such knots are determined by their exteriors.
The proof of Lemma 3 in \cite{HK98} is wrong, and thus the later
assumptions in that paper that the homotopy quadruple discussed 
there is a {\it complete\/} invariant for simple 4-knots are unjustified.
(We thank W.Richter for pointing out our error).
Nevertheless it remains possible that the result may be true, at least if
$X'$ is sufficiently highly connected.

The inclusion of the boundary of a Seifert hypersurface $V$ for
$K$ has trivial suspension in $\pi_{2q+1}(SV)$ \cite{Su}.
Therefore $\lambda_K$ is in the kernel
of the suspension $E:\pi_{2q}(X')\to\pi_{2q+1}(SX')$.
The group $\pi_{2q}(X')$ is just outside the stable range,
since $X'$ is $(q-1)$-connected, 
and sits near one extremity of the EHP sequence (Theorem XII.2.2 of \cite{GWW}):
\[\pi_{3q-2}(X')\to\dots\to\pi_{2q+2}(SX')\to\pi_{2q+2}(X'*X')\to
\pi_{2q}(X')\to\pi_{2q+1}(SX')\to0.\]
Moreover $X'\simeq{M_A}\vee{M_B}$ is a wedge of Moore spaces
$M_A=M(A,q)$ and $M_B=M(B,q+1)$,
since $X'$ is $(q-1)$-connected, $H_j(X';\mathbb{Z})=0$ for $j>q+1$ and $q>2$.
Hence $\pi_{2q}(X')\cong\pi_{2q}(M_A)\oplus\pi_{2q}(M_B)\oplus[A,B]$,
where $[A,B]$ is the subgroup generated by all Whitehead products $[x,y]$
with $x\in\pi_q(M_A)\cong{A}$ and $y\in\pi_{q+1}(M_B)\cong{B}$.
The summand $\pi_{2q}(M_B)$ is stable, 
while all Whitehead products have trivial suspension.

Suppose now that $K$ is fibred and $A\cong\mathbb{Z}^\beta$ is torsion-free.
Then $M_A\simeq\vee^\beta{S^q}$ and $M_B\simeq\vee^\beta{S^{q+1}}$.
Let $\{i^m_q\}_{m\leq\beta}$ and $\{i^n_{q+1}\}_{n\leq\beta}$
be bases for $A$ and the image of $B\cong\pi_{q+1}(\vee^\beta{S^{q+1}})$ 
in $\pi_{q+1}(X')$, respectively.
In this case $\mathrm{Ker}(E)$ is generated by all Whitehead products 
$i_{mm'}=[i^m_q,i^{m'}_q\eta_{q+1}]$ 
and $j_{mn}=[i^m_q,i^{n}_{q+1}]$ for all $m,m',n\leq\beta$.
Thus $\lambda_K=\Sigma c_{mm'}i_{mm'}+\Sigma d_{mn}j_{mn}$
for some coefficients $c_{mm'}\in\mathbb{F}_2$ and $d_{mn}\in\mathbb{Z}$.
The integral coefficients $d_{mn}$ may be detected by 
computing cup products in $(S^q\vee{S^{q+1}})\cup_{p\lambda}e^{2q+1}$
for suitable projections $p:X'\to{S^q\vee{S^{q+1}}}$.
(See \cite{Ja57}, and note that 
$S^q\vee{S^{q+1}}\cup_{[i,j]}e^{2q+1}\simeq{S^q}\times{S^{q+1}}$.)
Hence the matrix $D=[d_{mn}]$ is invertible,
since $(X',\lambda_K)$ is a $PD_{2q+1}$-complex,
and we may choose the basis for $B$ so that $D=I_\beta$.

\smallskip
\noindent{\bf Question.}
{\sl
How are the coefficients $c_{mm'}$ determined by the pairing $\Psi$?
}

\smallskip
An analogous but simpler argument applies if $K$ is a
fibred simple $(4k-1)$-knot.
For then $X'\simeq\vee^\beta{S^{2k}}$ and $\mathrm{Ker}(E)\leq\pi_{4k-1}(X')$ 
is generated by the Whitehead products $[i^m_{2k},i^n_{2k}]$ for $m,{n}\leq\beta$.
The coefficients $e_{mn}$ of $\lambda_K=\Sigma e_{mn}[i^m_{2k},i^n_{2k}]$
may be detected by cup products in $S^{2k}\cup_{p\lambda}e^{4k}$
or $(S^{2k}\vee{S^{2k}})\cup_{p\lambda}e^{4k}$
for suitable projections $p:X'\to S^{2k}$ 
(and using the Hopf invariant, as on page 211 of \cite{Ma})
or $p:X'\to S^{2k}\vee{S^{2k}}$ (and using \cite{Ja57}).

When $T(A)\not=0$ there may be elements of $\mathrm{Ker}(E)$ which are not 
Whitehead products, as can be seen already when $A=Z/pZ$.
This is the situation that arises for odd finite simple $2q$-knots,
for which $A=T(A)$ is finite of odd order and the Farber
quintuple reduces to the torsion linking pairing $\ell$.
(See \cite{Ko79}.)

\section{Hermitian self-dualities}

Let $\mathcal{C}$ be an additive category with an involution $*$ and let
$\varepsilon=\pm1$.
An $\varepsilon$-hermitian pairing on an object $C$ of $\mathcal{C}$ is
defined to be an isomorphism $\phi:C\to{C}^*$ such that 
$\phi^*=\varepsilon\phi$.
This categorical point of view is particularly useful if the 
Krull-Schmidt Theorem holds for the objects of $\mathcal{C}$
and the endomorphism rings of objects are radically complete.
For then the analysis of the decompositions of hermitian pairings 
into orthogonal direct sums may be reduced to corresponding questions for
pairings over (skew) fields \cite{QSS}.

Consider triples $Q=(A,\theta,\mathcal{E})$ where $A$ is a 
finitely generated $\Lambda$-module on which $t-1$ acts invertibly,
$\theta:T(A)\to{e^2A}$ is an isomorphism
and $\mathcal{E}$ is an exact sequence of $\Lambda$-modules
\begin{equation}
\begin{CD}
0\to{A/2A}@>\alpha>>\Pi@>\omega>>{e^2(A/2A)}\to0.
\end{CD}
\end{equation}
A morphism between two such triples $Q$ and $Q'$ is a triple 
$\phi=(f,g,h)$, where $f,g:A\to{A'}$ and $h:\Pi\to\Pi'$, 
such that $\theta'f|_{T(A)}=e^2g\theta$,
$h\alpha=\alpha'[f]_2$ and $\omega'h=e^2[g]_2\omega$.
The category $\mathcal{Q}$ of such triples is additive.

Define a duality functor by 
$Q^*=(A,\theta,\mathcal{E})^*=(A,e^2\theta,e^2\mathcal{E})$
and $(f,g,h)^*=(g,f,e^2h)$.
An isomorphism $\phi=(f,g,h)$ from $Q$ to $Q^*$ is
an {\it $\varepsilon$-hermitian self-duality} of $Q$
if $\phi^*=\varepsilon\phi$,
in which case $f$ and $g$ are automorphisms of $A$,
$f=\varepsilon{g}$ and $h=\varepsilon{e^2h}$.
A pair $(Q,\phi)$ with $\phi$ an $\varepsilon$-hermitian self-duality of $q$ 
is {\it hyperbolic} if there is an object $N$ in $\mathcal{Q}$ such that 
$Q\cong{N}\oplus{N^*}$ and $\phi$ has matrix 
$\left(\begin{smallmatrix}
0&\varepsilon\\ 1&0
\end{smallmatrix}\right)$ 
with respect to this decomposition.

\begin{theorem}
Farber quintuples for simple $2q$-knots correspond bijectively to
pairs $(Q,\phi)$ where $Q=(A,\theta,\mathcal{E})$ is an object in 
$\mathcal{Q}$ and $\phi=(f,g,h)$ is a $(-1)^{q+1}$-hermitian self-duality 
of $Q$ such that $\alpha[\gamma(p)]_2=2p$ for all $p\in\Pi$,
where $\gamma(p)$ is determined by
$\sigma_A(e^2g\theta(\gamma(p)))(x)=\sigma_\Pi{h}(\alpha([x]_2))(p)$, 
for all $x\in{T(A)}$ and $p\in\Pi$.
\end{theorem}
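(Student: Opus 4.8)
The plan is to establish a dictionary between the data of a Farber quintuple $(A,\Pi,\alpha,\ell,\psi)$ and the data of a pair $(Q,\phi)$ with $Q=(A,\theta,\mathcal{E})$ and $\phi=(f,g,h)$ an $\varepsilon$-hermitian self-duality, where $\varepsilon=(-1)^{q+1}$. First I would translate each piece of the quintuple into categorical language. The module $A$ is common to both descriptions. The linking pairing $\ell:T(A)\times T(A)\to\mathbb{Q}/\mathbb{Z}$, being nonsingular and $\varepsilon$-symmetric, has adjoint $Ad(\ell):T(A)\to Hom_\mathbb{Z}(T(A),\mathbb{Q}/\mathbb{Z})$; composing with the inverse of Levine's isomorphism $\sigma_A:e^2A\cong Hom_\mathbb{Z}(T(A),\mathbb{Q}/\mathbb{Z})$ from \S3 produces exactly an isomorphism $\theta:T(A)\to e^2A$. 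The exact sequence $\mathcal{E}$ of the triple, namely $0\to A/2A\xrightarrow{\alpha}\Pi\xrightarrow{\omega} e^2(A/2A)\to0$, should be identified with the reduction of sequence (2); here I must check that the cokernel $B/2B$ appearing in the $F$-form is naturally $e^2(A/2A)=\overline{Ext^2_\Lambda(A/2A,\Lambda)}$ using the Universal Coefficient identifications of \S4, so that $\omega$ is the map $\theta\beta$ transported through $\sigma$. The monomorphism $\alpha$ is shared. The symmetry condition $\phi^*=\varepsilon\phi$ should encode precisely the $\varepsilon$-symmetry of both $\ell$ and $\psi$ simultaneously.

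\textbf{Recovering the self-duality from $\psi$.} The pairing $\psi:\Pi\times\Pi\to Z/4Z$, which is $\varepsilon$-symmetric and nonsingular, yields an adjoint $Ad(\psi):\Pi\to Hom_\mathbb{Z}(\Pi,\mathbb{Q}/\mathbb{Z})$; I would convert this into the morphism component $h:\Pi\to\Pi'=\Pi$ of $\phi$ by composing with the isomorphism $\sigma_\Pi$ that identifies $Hom_\mathbb{Z}(\Pi,\mathbb{Q}/\mathbb{Z})$ with $e^2\Pi$, and then with the identification of $e^2\Pi$ arising from applying $e^2$ to $\mathcal{E}$. Conversely, given $(f,g,h)$, I recover $\psi$ by reversing these identifications. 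The remaining components $f=\varepsilon g$ are forced to be the automorphism of $A$ inducing the self-duality $\theta$ built from $\ell$; the relation $\theta' f|_{T(A)}=e^2g\,\theta$ is exactly the statement that this automorphism is an isometry for $\ell$, which in the self-dual case means $f$ preserves the pairing. The heart of the verification is that the compatibility conditions $h\alpha=\alpha'[f]_2$ and $\omega' h=e^2[g]_2\,\omega$ match the interaction recorded in \S5, namely that $\beta=\alpha^*Ad(\psi)$ is an epimorphism with kernel $\mathrm{Im}(\alpha)$ and that $Ad(\psi)$ respects $\alpha$.

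\textbf{The constraint $\alpha[\gamma(p)]_2=2p$.} The key structural identity tying $\ell$ and $\psi$ together in a Farber quintuple is $\alpha([\gamma(p)]_2)=2p$, where $\gamma$ is determined by $\psi(p,\alpha([x]_2))=\ell(\gamma(p),x)$. I would verify that the formula $e^2g\,\theta(\gamma(p))(x)=\sigma_\Pi h(\alpha([x]_2))(p)$ appearing in the theorem statement is the faithful categorical transcription of this defining relation for $\gamma$: the left side is $\gamma(p)$ paired against $x$ through $\ell$ (encoded via $\theta$), and the right side is $\psi(\alpha([x]_2),p)$ read through $h=\sigma_\Pi Ad(\psi)$. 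Once $\gamma$ is correctly identified, the extra hypothesis $\alpha[\gamma(p)]_2=2p$ imposed on the self-duality is literally the last sentence of the Farber-quintuple definition. Thus a self-duality satisfying this extra condition corresponds exactly to a quintuple, and the correspondence is bijective because each translation step is given by a fixed natural isomorphism ($\sigma$, Levine's and the UCSS identifications) with an explicit inverse.

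\textbf{The main obstacle.} The genuinely delicate step is the naturality and compatibility of the several distinct duality identifications being composed: Levine's isomorphism $e^2A\cong Hom_\mathbb{Z}(T(A),\mathbb{Q}/\mathbb{Z})$, the identification $e^2\Pi\cong Hom_\mathbb{Z}(\Pi,\mathbb{Q}/\mathbb{Z})$, and the effect of the functor $e^2(-)$ on the exact sequence $\mathcal{E}$. I expect the hard part will be confirming that the sign $\varepsilon=(-1)^{q+1}$ propagates consistently through all these identifications — in particular that the categorical self-duality condition $\phi^*=\varepsilon\phi$, with $\phi^*=(g,f,e^2h)$, reproduces simultaneously the $\varepsilon$-symmetry of $\ell$ and of $\psi$ without an extra sign discrepancy between them. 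Checking that $e^2\theta$ and $\theta$ are genuinely interchanged by the involution, so that $(A,\theta,\mathcal{E})^*=(A,e^2\theta,e^2\mathcal{E})$ really is the dual carrying the transposed linking form, is where the sesquilinear-versus-conjugate bookkeeping must be done with care.
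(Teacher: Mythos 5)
Your proposal is correct and follows essentially the same route as the paper: translate $\ell$ into $\theta=\sigma_A^{-1}Ad(\ell)$, $\psi$ into $h=\sigma_\Pi^{-1}Ad(\psi)$ and $\beta$ into $\omega$ via the Levine-type isomorphisms of \S3, check the compatibility conditions, and observe that the displayed equation is just the transcription of the defining relation for $\gamma$ so that $\alpha[\gamma(p)]_2=2p$ is the one extra constraint. The only cosmetic difference is that the paper simply takes $f=g=1_A$ in the forward direction rather than describing $f$ and $g$ as induced automorphisms.
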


\begin{proof}
Let $(A,\Pi,\alpha,\ell,\psi)$ be a Farber quintuple
with $(-1)^{q+1}$-symmetric pairings $\ell$ and $\psi$  
and let $\beta$ be the associated epimorphism.
Let $\theta= \sigma_A^{-1}Ad(\ell)$, $\omega=\sigma_A^{-1}\beta$ and
$\mathcal{E}$ be the exact sequence determined by $\alpha$ and $\omega$.
Then $Q=(A,\theta,\mathcal{E})$ is in $\mathcal{Q}$ and 
$(1_A,1_A,\sigma_\Pi^{-1}Ad(\psi))$ is a
$(-1)^{q+1}$-hermitian self-duality of $Q$.

Conversely, if $\phi=(f,g,h)$ is a $(-1)^{q+1}$-hermitian self-duality of
$(A,\theta,\mathcal{E})$ we may define pairings $\ell$ on $T(A)$
and $\psi$ on $\Pi$ by setting $\ell(a,a')=e^2g\theta(a')(a)$ for all 
$a,a'\in{T(A)}$ and $\psi(p,q)=\sigma_\Pi(h(q))(p)$ for all $p,q\in\Pi$.
(We then have $\beta=\sigma_Ae^2g\omega$.)
Then $(A,\Pi,\alpha,\ell,\psi)$  is almost a Farber quintuple; 
we need only the additional condition that $\alpha[\gamma(p)]_2=2p$ 
for all $p\in\Pi$, where $\gamma$ is defined as in \S5.
The equation in the statement of the theorem is a reformulation of this
condition in terms of the constituents of $\phi$.
\end{proof}

The Krull-Schmidt Theorem holds in any additive category in which 
\begin{enumerate}
\item  all idempotents split; and

\item every object is a finite sum of objects whose endomorphism rings are local
rings.
(Such summands are necessarily indecomposable).
\end{enumerate}
It is easily verified that all idempotents in $\mathcal{Q}$ split.
We shall use the next lemma to determine the objects of $\mathcal{Q}$ 
which satisfy the second condition. 

\begin{lemma}
Let $R$ be a local $\Lambda$-algebra which is finitely generated as a
$\Lambda$-module. Then $R$ is finite.
\end{lemma}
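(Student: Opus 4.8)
The plan is to pass to the commutative central subring $\Lambda_0\subseteq R$ that is the image of the structure homomorphism $\Lambda\to R$, and to prove that $\Lambda_0$ is already finite. Since $R$ is finitely generated as a $\Lambda$-module, it is finitely generated as a $\Lambda_0$-module, say by $x_1,\dots,x_n$; hence once $\Lambda_0$ is known to be finite we get $|R|\le|\Lambda_0|^n<\infty$. Note that $\Lambda_0$ is a quotient of $\Lambda=\mathbb{Z}[t,t^{-1}]$, so it is a commutative, finitely generated $\mathbb{Z}$-algebra, and $R$ is integral over $\Lambda_0$, being module-finite.

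First I would deduce that $\Lambda_0$ is a \emph{local} ring from the hypothesis that $R$ is local. Let $J=J(R)$ be the maximal (two-sided) ideal of $R$, and let $\mathfrak{n}$ be any maximal ideal of $\Lambda_0$. The key point is that $\mathfrak{n}R\ne R$: if $\mathfrak{n}R=R$ then the determinant trick (Cayley--Hamilton applied to the finitely generated $\Lambda_0$-module $R$) yields $a\in\mathfrak{n}$ with $(1+a)R=0$, whence $1+a=0$ and $1\in\mathfrak{n}$, a contradiction. Therefore $\mathfrak{n}R$ is a proper left ideal, so it lies in the unique maximal left ideal $J$, and consequently $\mathfrak{n}=\mathfrak{n}\cdot 1\subseteq J\cap\Lambda_0$. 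As $1\notin J$, the ideal $J\cap\Lambda_0$ is proper, so every maximal ideal of $\Lambda_0$ is contained in this one proper ideal; this forces all maximal ideals of $\Lambda_0$ to coincide, i.e.\ $\Lambda_0$ is local.

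It then remains to observe that a commutative local ring which is a finitely generated $\mathbb{Z}$-algebra is finite. Because $\mathbb{Z}$ is a Jacobson ring, so is $\Lambda_0$, and its Jacobson radical equals its nilradical; locality then forces every prime of $\Lambda_0$ to equal the unique maximal ideal $\mathfrak{m}$, so $\Lambda_0$ is zero-dimensional, hence Artinian local (it is Noetherian, being a quotient of $\Lambda$). The residue field $\Lambda_0/\mathfrak{m}$ is a field that is finitely generated as a ring, hence finite by the arithmetic form of the Nullstellensatz; and an Artinian local ring of finite length over a finite residue field is finite, since each factor $\mathfrak{m}^i/\mathfrak{m}^{i+1}$ is a finite-dimensional vector space over $\Lambda_0/\mathfrak{m}$. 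Thus $\Lambda_0$ is finite, and with it $R$.

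The main obstacle is the transfer of locality from the possibly noncommutative ring $R$ to its central subring $\Lambda_0$; this is exactly where integrality and the determinant trick are needed, since the usual lying-over statements are phrased for commutative extensions. The remaining input — that a field finitely generated as a $\mathbb{Z}$-algebra must be finite — is classical, and everything else is routine commutative algebra over the Jacobson ring $\mathbb{Z}$.
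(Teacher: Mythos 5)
Your proof is correct, and while it follows the same overall strategy as the paper --- reduce to showing that the image $\Lambda_0=p(\Lambda)$ of $\Lambda$ in $R$ is a finite local ring, then conclude by module-finiteness --- the two intermediate steps are carried out by genuinely different means. The paper first shows $m=p^{-1}(J)$ is maximal (the skewfield $R/J$ is module-finite over the domain $\Lambda/m$, which forces $\Lambda/m$ to be a field), then obtains locality of $p(\Lambda)$ by embedding the localization $p(\Lambda)_m$ into $R$, noting it is integral over $p(\Lambda)$, and applying lying-over (Theorem 5.10 of \cite{AM}) to push every prime of $p(\Lambda)$ inside $m$; finiteness is then extracted from the primary decomposition of $K=\ker p$. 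You instead get locality in one stroke from the determinant trick: $\mathfrak{n}R\neq R$ for every maximal ideal $\mathfrak{n}$ of $\Lambda_0$, hence $\mathfrak{n}\subseteq J\cap\Lambda_0$, so all maximal ideals coincide; and you get finiteness from the Jacobson property of $\mathbb{Z}$ (hence of $\Lambda_0$) together with the arithmetic Nullstellensatz and the Artinian filtration by powers of $\mathfrak{m}$. Your route to locality is arguably cleaner, since it avoids realizing a localization as a subring of $R$ and transfers locality directly through Nakayama; your route to finiteness makes explicit the Jacobson-ring input that the paper's appeal to primary decomposition leaves implicit (one must rule out a non-maximal prime of $\Lambda$ lying strictly between $\ker p$ and $m$, and that is exactly where the Jacobson property of $\mathbb{Z}[t,t^{-1}]$ enters). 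Both arguments ultimately rest on the same classical fact that a field which is finitely generated as a $\mathbb{Z}$-algebra is finite.
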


\begin{proof}
Let $p:\Lambda\to{R}$ be the natural homomorphism.
Let $J=\mathrm{rad}(R)=R\setminus{R^\times}$ and $m=p^{-1}(J)$.
The skewfield $R/J$ is finitely generated as a $\Lambda/m$-module.
It follows easily that $\Lambda/m$ is a field.
In particular $m$ is a maximal ideal (and $R/J$ is a finite field).
Since $p(\Lambda)_m\leq{R}$ it is an integral extension of $p(\Lambda)$,
and every prime ideal of $p(\Lambda)$ is the restriction 
of a prime ideal of $p(\Lambda)_m$, by Theorem 5.10 of \cite{AM}.
Thus $p(\Lambda)$ is also a local ring, and so $m$ is the unique maximal ideal
of $\Lambda$ which contains $K=\mathrm{Ker}(p)$.
Consideration of the primary decomposition of $K$ 
shows that $K$ must be $m$-primary and $p(\Lambda)=\Lambda/K$ finite.
Hence $R$ is also finite.
\end{proof}

An object $N$ in an abelian category $\mathcal{C}$ is a {\it torsion} object
if the ring $End_{\mathcal{C}}(N)=\mathcal{C}(N,N)$ is finite.

\begin{theorem}
Let $Q=(A,\theta,\mathcal{E})$ in $\mathcal{Q}$.
Then the following are equivalent
\begin{enumerate}
\item $A$ is a finite $\Lambda$-module;

\item $Q$ is a torsion object;

\item $Q$ is a finite sum of objects whose endomorphism rings are local rings.
\end{enumerate}
The Krull-Schmidt Theorem holds in the full subcategory of $\mathcal{Q}$ 
determined by the torsion objects, and the endomorphism rings of such objects
are radically complete.
\end{theorem}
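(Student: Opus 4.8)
The plan is to prove the cycle $(1)\Rightarrow(2)\Rightarrow(3)\Rightarrow(1)$ and then to read off the two final assertions from the Krull--Schmidt criterion recalled above. Throughout I use that the forgetful assignment $Q\mapsto{A}$ is additive and that each coordinate projection $(f,g,h)\mapsto{f}$, $(f,g,h)\mapsto{g}$, $(f,g,h)\mapsto{h}$ is a ring homomorphism, so that $End_{\mathcal Q}(Q)$ embeds as a subring of $End_\Lambda(A)\times{End_\Lambda(A)}\times{End_\Lambda(\Pi)}$. For $(1)\Rightarrow(2)$: if $A$ is finite then $A/2A$ and $e^2(A/2A)$ are finite, hence so is $\Pi$ by the sequence $\mathcal E$; therefore $End_\Lambda(A)$ and $End_\Lambda(\Pi)$ are finite and $End_{\mathcal Q}(Q)$ is finite, i.e. $Q$ is a torsion object.

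The implication $(2)\Rightarrow(1)$ is the \emph{crux}. Writing $\overline{\lambda}$ for the image of $\lambda\in\Lambda$ under the conjugation $t\mapsto{t^{-1}}$, the key observation is that
\[
\lambda\longmapsto(\lambda\cdot,\ \overline{\lambda}\cdot,\ \lambda\cdot)
\]
defines a ring homomorphism $\Lambda\to{End_{\mathcal Q}(Q)}$ into the centre. The naive diagonal $(\lambda,\lambda,\lambda)$ is a $\mathcal Q$-morphism only for symmetric $\lambda$, which is why the conjugate enters the middle slot. Multiplicativity is clear since $\overline{\lambda\mu}=\overline{\lambda}\,\overline{\mu}$; that the triple satisfies the three defining relations of a morphism uses the $\Lambda$-linearity of $\theta$, $\alpha$ and $\omega$ together with the fact that $e^2$ sends multiplication by $\mu$ to multiplication by $\overline{\mu}$, so that $e^2(\overline{\lambda}\cdot)=\lambda\cdot$. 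Composing with the first projection gives the map $\Lambda\to{End_\Lambda(A)}$, $\lambda\mapsto\lambda\cdot$, whose image is $\Lambda/\mathrm{Ann}_\Lambda(A)$. If $End_{\mathcal Q}(Q)$ is finite this image is finite, and then $A$, being finitely generated over the finite ring $\Lambda/\mathrm{Ann}_\Lambda(A)$, is finite.

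For $(2)\Rightarrow(3)$: a finite ring is Artinian, hence semiperfect, so $1\in{End_{\mathcal Q}(Q)}$ is a sum of orthogonal primitive idempotents with local corner rings; since idempotents split in $\mathcal Q$ this yields $Q\cong{Q_1}\oplus\cdots\oplus{Q_n}$ with each $End_{\mathcal Q}(Q_i)$ local. For $(3)\Rightarrow(1)$: the central copy of $\Lambda$ above makes each $End_{\mathcal Q}(Q_i)$ a $\Lambda$-algebra, and it is a $\Lambda$-submodule of $End_\Lambda(A_i)\times{End_\Lambda(A_i)}\times{End_\Lambda(\Pi_i)}$, where $\Lambda$ acts through the conjugation on the middle factor; since $\Lambda$ is noetherian and $A_i,\Pi_i$ are finitely generated, each factor is a finitely generated $\Lambda$-module, hence so is $End_{\mathcal Q}(Q_i)$. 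Being a local $\Lambda$-algebra which is finitely generated as a $\Lambda$-module, it is finite by Lemma 7. Thus each $Q_i$ is a torsion object, so $A_i$ is finite by $(2)\Rightarrow(1)$, and therefore $A\cong\bigoplus{A_i}$ is finite.

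It remains to treat the torsion subcategory. A direct summand of a torsion object is again torsion, since its underlying module is a summand of a finite module; hence idempotents split within the torsion subcategory, and by the equivalences just proved every torsion object is a finite sum of objects with local endomorphism rings. The recalled Krull--Schmidt criterion therefore applies. Finally, the endomorphism ring of a torsion object is finite, hence Artinian with nilpotent Jacobson radical, and so is complete in the radical-adic topology; these rings are thus radically complete. The main obstacle is the construction and verification of the conjugation-twisted scalar homomorphism in $(2)\Rightarrow(1)$; the remaining steps are routine once Lemma 7 and the Krull--Schmidt criterion are in hand.
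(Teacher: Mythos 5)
Your proposal is correct and follows essentially the same route as the paper: reduce everything to finiteness of endomorphism rings of the underlying $\Lambda$-modules, use the splitting of idempotents for $(2)\Rightarrow(3)$, invoke the lemma on local $\Lambda$-algebras for the converse, and deduce Krull--Schmidt and radical completeness from the resulting Artinian rings. The only real difference is that where the paper simply asserts that $End_{\mathcal Q}(Q)$ is commensurable with $End_\Lambda(A)\times{End_\Lambda(A)}$, you make the comparison explicit via the conjugation-twisted central homomorphism $\lambda\mapsto(\lambda\cdot,\overline{\lambda}\cdot,\lambda\cdot)$ (correctly accounting for $e^2(\overline{\lambda}\cdot)=\lambda\cdot$), which is a legitimate and arguably cleaner way to extract the finiteness of $\Lambda/\mathrm{Ann}_\Lambda(A)$.
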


\begin{proof}
It is immediate that $End_\mathcal{Q}(Q)$ is commensurable with
$End_\Lambda(A)\times{End_\Lambda(A)}$.
Thus $End_\mathcal{Q}(Q)$ is finite if and only if $End_\Lambda(A)$ is finite.
It follows easily that $Q$ is a torsion object in
$\mathcal{Q}$ if and only if $A$ is finite. 
Thus $(1)\Leftrightarrow(2)$.

In a finite ring some power of each element is idempotent.
Thus a finite ring with no nontrivial idempotents is local.
Since idempotents in $\mathcal{Q}$ split it follows that
$(2)\Rightarrow(3)$.

If $End_\mathcal{Q}(Q)$ is local then it is finite, by the lemma, and so 
$Q$ is a torsion object. 
Thus $(3)\Rightarrow(2)$.

The final assertion is now clear, since the radical of an Artinian ring is
nilpotent.
\end{proof}

If $(A,\theta,\mathcal{E})$ is a torsion object in $\mathcal{Q}$ then
$\theta:A\cong{e^2A}$ and $e^2(A/2A)\cong{_2A}$,
and the self-dual torsion objects of $\mathcal{Q}$ are essentially 
equivalent to the self-dual objects of the category considered in \cite{Hi86}.
(However, the latter category is {\it not\/} the subcategory 
of torsion objects of $\mathcal{Q}$.)

In \cite{FH93, Hi86} and \cite{HK89} certain classes of knots were classified by
similar reductions to pairings over finite fields, and in \cite{FH93} and \cite{Hi86} 
these invariants were applied to questions related to double null concordance for such knots.
Kearton had earlier shown that a torsion-free simple $2q$-knot is doubly null concordant
if and only if its $F$-form was hyperbolic, in an appropriate sense \cite{Ke84}.
Thus we expect that 
{\it
a simple $2q$-knot $K$ with $q\geq4$ is doubly null concordant if and only if
$q_F(K)$ is hyperbolic.}
Moreover,  
{\it a simple $2q$-knot $K$ with $q\geq4$ is stably doubly null concordant if and only if the
Witt class of $q_F(K)$ is trivial.}
Establishing the latter result may require extending $q_F(-)$ to
an invariant of all even-dimensional knots.

\end{document}